\journal{Arxiv}
\definecolor{mycolor}{RGB}{255,251,204}
\tikzstyle{mybox} = [draw=yellow, very thick,
\tikzstyle{fancytitle} =[fill=white, text=red]
\newtheorem{thm}{Theorem}[section]
\newtheorem{lem}[thm]{Lemma}
\newtheorem{prop}[thm]{Proposition}
\newcommand{\nh}[1]{\|#1\|_{H^1_\alpha}}
\newcommand{\ha}{H^1_\alpha}
\theoremstyle{definition}
\newtheorem{defn}[thm]{Definition}
\theoremstyle{remark}
\newtheorem{rem}[thm]{Remark}
\numberwithin{equation}{section}
\newcommand{\ep}{\varepsilon}
\newcommand{\R}{\mathbb{R}}				      
\newcommand{\dps}{\displaystyle}	
\newcommand{\intq}{\int_0^T\int_0^1}	
\newcommand{\intw}{\int_0^T\int_{1-\ep}^1}	
\newcommand{\into}{\int_0^1}	
\newcommand{\dd}{\,dxdt}
\newcommand{\n}[2]{\|#1\|_{{#2}}}
\renewcommand{\@todonotes@drawMarginNoteWithLine}{%
	\begin{tikzpicture}[remember picture, overlay, baseline=-0.75ex]%
	\node [coordinate] (inText) {};%
	\end{tikzpicture}%
	\marginnote[{
		\@todonotes@drawMarginNote%
		\@todonotes@drawLineToLeftMargin%
	}]{
		\@todonotes@drawMarginNote%
		\@todonotes@drawLineToRightMargin%
	}%
}
\begin{document}

\begin{frontmatter}

\title{\textsc{Regularity results for degenerate wave equations in a neighborhood of the boundary}}

\author[UFCG]{B. S. V. Ara\'ujo}
\ead{bsergio@mat.ufcg.edu.br}

\author[RCN]{R. Demarque\corref{mycorrespondingauthor}}
\cortext[mycorrespondingauthor]{Corresponding author}
\ead{reginaldo@id.uff.br}

\author[GAN]{L. Viana}
\ead{luizviana@id.uff.br}

\address[UFCG]{Unidade Acadêmica de Matemática, Universidade Federal de Campina Grande, Campina Grande, PB, Brazil}
\address[RCN]{Departamento de Ciências da Natureza,
	Universidade Federal Fluminense,
	Rio das Ostras, RJ, Brazil}
\address[GAN]{Departamento de Análise,
	Universidade Federal Fluminense,
	Niter\'{o}i, RJ, Brazil}

\begin{abstract}
In this paper we establish some regularity results concerning the behavior of weak solutions and very weak solutions of the degenerate wave equation near the boundary. For the nondegenerate case, the correponding results were originally obtained by Fabre and Puel (J. of Diff. Eq. 106, 1993). This kind of results is closely related to the exact boundary controllability for the wave equation as the limit of internal controllability.
\end{abstract}

\begin{keyword}
Degenerate wave equation, regularity of solutions, behavior near the boundary.
\MSC[2020]{35L05, 35L80, 35B65, 35B40}
\end{keyword}

\end{frontmatter}

\section{Introduction and Statements of the main results}\label{intro}

In this work we are interested in studying the behavior, near the boundary point $x=1$, of the weak and very weak solutions of the following degenerate wave equation: 
\begin{equation}
	\begin{cases}
	u_{tt}- \displaystyle \left(x^\alpha u_{x} \right)_x=f, & (t,x)\in Q,\\
	u(t,1)=0,& \text{ in } (0,T),\\
	\begin{cases}
	u (t,0)=0, &\text{if } \alpha \in (0,1),\\
	\text{or}& \\
	(x^\alpha u_{ x})(t,0)=0,&  \text{if } \alpha \in [1,2),
	\end{cases} &  t\in (0,T), \\
	u (0,x)=u_0(x) \text{ and } u_{t} (0,x)=u_1(x) & x\in  (0,1),
	\end{cases}
	\label{pb1}
	\end{equation}
where $T>0$,  {$Q=(0,T)\times (0,1)$}, $\alpha\in(0,2)$ and the data $(f,u_0,u_1)$ belongs to spaces that will determine the regularity of the solution. To develop this study, the $L^2$ norm of the solution will be analyzed in an $\varepsilon$-neighborhood of the boundary point $x=1$.

For the nondegenerate wave equation, an analogous investigation has been considered by Fabre and Puel in \cite{fabre1993behavior}. Their results have played a key role in \cite{fabre1992exact}, where an exact boundary controllability is achieved as the limit of a sequence of internal controllability problems, set in $\varepsilon$- neighborhoods of the boundary, as $\varepsilon \to 0$. Here, we are supposed to remark that the previous work arises from \cite{zuazua1988controlabilite}, where Zuazua has used Lion's Hilbert uniqueness method (HUM) to assure the exact controllability for the nondegenerate wave equation when the distributed control acts on an $\ep$-neighborhood $\omega_\ep$ of $\Gamma_0 \subset \Gamma$, where $\Gamma$ denotes the boundary of the domain. So that, in \cite{fabre1992exact}, a passage to the limit procedure can be established, by studying the convergence of solutions of 
\begin{equation}\label{sing-eq}
\psi_{\varepsilon tt}-\Delta \psi_\ep=\frac{1}{\ep^3}{\varphi}_\ep\chi_{\omega_\ep\times (0,T)}, \text{ in } Q,
\end{equation}
where there exists a singular right-hand side with respect to $\ep$ and   $\frac{1}{\ep^3}{\varphi}_\ep$ is the control given by HUM as a solution of an adjoint problem.

Recently, in \cite{chaves2020boundary}, Chaves-Silva, Puel and Santos have extended the research contained in \cite{fabre1992exact} for the nondegenerate heat equation context. Shortly afterwards, in \cite{araujo2022boundary}, we have proposed a natural continuation for the subject of \cite{chaves2020boundary} and \cite{fabre1992exact}, taking into consideration the one-dimensional degenerate heat equation. At this point, asking ourselves if those results presented in \cite{araujo2022boundary,chaves2020boundary,fabre1992exact} can be proved for the one-dimensional degenerate wave equation seems a realistic sequel.

Concerning the nondegenerate heat and wave equations, we would also contrast the following facts: even though \cite{chaves2020boundary} and \cite{fabre1992exact} communicate similar theorems, their obtainment does not come from the same techniques. This is really expected, since, in the whole Control Theory, parabolic and hyperbolic equations are usually dealt with different approaches. Both problems are solved using a bound for the $L^2$-norms of weak solutions in $\varepsilon$-neighborhoods of the boundary. However, in \cite{chaves2020boundary}, the desired bound is achieved from specific weighted Carleman estimates, while, in \cite{fabre1992exact}, it can be found relying on the regularity results presented in \cite{fabre1993behavior}. In fact, while passing to the limit,  such a regularity results are crucial in order to check the continuity of a certain functional. As pointed out by Fabre in \cite{fabre1993behavior}, equations like \eqref{sing-eq} can be stated regardless the exact controllability context, which means that the mentioned regularity results can be useful in other more general situations. Having in mind this initial discussion, before trying to extend \cite{araujo2022boundary} for the degenerate wave equation context, our analysis starts from this current paper, where we will obtain  regularity theorems  which are analogous to that ones proved in \cite{fabre1993behavior}.   

Next, before stating our main results, let us present some important functional spaces, introduced in \cite{alabau2006carleman}.

\begin{defn} [Weighted Sobolev spaces]
Consider $\alpha \in (0,1)$, for the \textbf{weakly degenerate case} (WDC), or $\alpha \in [1,2)$, for the \textbf{strongly degenerate case} (SDC).

\begin{itemize}
	\item[(I)] For the (WDC), we set 
	\begin{equation*}
	H_{\alpha}^1:= \{  u\in L^2(0,1);\ u\mbox{ is absolutely continuous in } [0,1],
	x^{\alpha /2}u_x\in L^2(0,1) \mbox{ and } u(1)=u(0)=0\},
	\end{equation*}
	equipped with the natural norm
\[ \|u\|_{H_{\alpha}^1}:=\left( \|u\|_{L^2(0,1)}^2+\| x^{\alpha /2} u_x\|_{L^2(0,1)}^2 \right) ^{1/2} ;\]
\item[(II)] For the (SDC),
\begin{equation*}
H_{\alpha}^1:= \{  u\in L^2(0,1);\ u\text{ is  {locally} absolutely continuous in } (0,1],
x^{\alpha /2} u_x\in L^2(0,1) \mbox{ and } u(1)=0\},
\end{equation*}  
and the norm keeps the same;

\item[(III)] In both situations, the (WDC) and the (SDC), 
\[
H_{\alpha}^2:= \{  u\in H_{\alpha}^1;\ x^{\alpha /2} u_x\in H^1(0,1) \}
\]
with the norm
$\|u\|_{H_{\alpha}^2}:=\left( \|u\|_{H_{\alpha}^1}^2+\|(x^{\alpha /2} u_x)_x\|_{L^2(0,1)}^2 \right) ^{1/2}$. 
\end{itemize}
\end{defn}	

Another important space in this context is $H_\alpha^{-1}=(H_\alpha^1)'$ (the dual space of $H_\alpha^1$). For  $u\in H_\alpha^{-1}$, from Lax-Milgram Theorem, there exists a unique  $\tilde{u}\in H_\alpha^{1}$ such that 
\[\langle u,v\rangle_{H_\alpha^{-1}}=\int_0^1x^\alpha \tilde{u}_xv_x\,dx \ \ \forall v\in H_\alpha^1.\]
Hence, $H_\alpha^{-1}$ is a Hilbert space equipped with the inner product $$(u,v)_{H_\alpha^{-1}}=\int_0^1x^\alpha\tilde{u}_x\tilde{v}_x\,dx.$$

Next, let us specify which kind of solution for \eqref{pb1} we will deal with.

\begin{defn}\label{weak} Given  $f\in L^1(0,T;L^2(0,1))$ and $(u_0,u_1)\in H^1_\alpha\times L^2(0,1)$, we say that 
\[
\displaystyle u\in C([0,T];H^1_\alpha)\cap C^1([0,T];L^2(0,1))
\]
is a \textbf{weak solution} of the system \eqref{pb1} if the following properties hold:
\begin{itemize}
\item[(a)] $u(0,x)=u_0(x)$ for all $x\in (0,1)$; 
\item[(b)] 
\begin{equation*}
    \intq (-u_t\varphi_t+x^\alpha u_x\varphi_x)\,dxdt -\into u_1\varphi(0,x)\,dx=\intq f\varphi\,dxdt,
\end{equation*}
for all $\varphi\in L^2(0,T;H^1_\alpha)$ satisfying $\varphi_t\in L^2(Q)$ and $\varphi(T,\cdot)=0$.
\end{itemize}
\end{defn}
At this moment, we are to state our two main results:
\begin{thm}\label{th2.1}
Given $0<\ep_0<1$, there exists $C>0$ such that, for all $(u_0,u_1)\in H_\alpha^1\times L^2(0,1)$ and $f\in L^1(0,T;L^2(0,1))$, if $u$ is a weak solution to \eqref{pb1}, then
\begin{equation*}
    \frac{1}{\ep^3}\intw |u(t,x)|^2 \dd  \leq C\left(\n{f}{L^1(0,T;L^2(0,1))}^2+\n{u_0}{H^1_\alpha}^2+\n{u_1}{L^2(0,1)}^2\right),\ \forall \ep \in (0,\ep _0 ],
\end{equation*}
where $C$ only depends on $\ep_0$, $\alpha$ and $T$.
\end{thm}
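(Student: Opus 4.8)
The plan is to establish the estimate by a multiplier argument adapted to the degenerate operator. First I would reduce to the homogeneous-data case by linearity: write $u = v + w$, where $v$ solves \eqref{pb1} with $f\equiv 0$ and initial data $(u_0,u_1)$, and $w$ solves \eqref{pb1} with zero initial data and source $f$. By Duhamel's principle, $w(t) = \int_0^t S(t-s)(0,f(s))\,ds$, so a bound for the homogeneous problem plus Minkowski's integral inequality in $L^1(0,T;L^2(0,1))$ transfers the estimate to $w$, producing the $\n{f}{L^1(0,T;L^2(0,1))}^2$ term. Hence it suffices to prove, for the homogeneous equation, that $\frac{1}{\ep^3}\intw |u|^2\dd \le C(\n{u_0}{H^1_\alpha}^2 + \n{u_1}{L^2(0,1)}^2)$.

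Next, for smooth solutions (obtained by density, approximating the data in $H^1_\alpha\times L^2$ and passing to the limit using the energy identity) I would exploit the conservation of energy $E(t) = \frac12\into(|u_t|^2 + x^\alpha|u_x|^2)\,dx = E(0)$, which controls the right-hand side. The core is a localized multiplier estimate near $x=1$: multiply the equation $u_{tt}-(x^\alpha u_x)_x=0$ by $q(x)u_x$ for a suitable cutoff-type weight $q$ with $q(1)=1$, $q$ supported near $x=1$, and integrate over $Q$. Integration by parts in the region $1-\ep < x < 1$ (where $x^\alpha$ is comparable to $1$, so the degeneracy at $x=0$ is harmless) produces, after using the boundary condition $u(t,1)=0$, a boundary term involving $\int_0^T x^\alpha|u_x|^2(t,1)\,dt$ together with interior integrals controlled by the energy. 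This yields a hidden-regularity bound $\int_0^T |u_x(t,1)|^2\,dt \le C\,E(0)$. Then, using $u(t,1)=0$ and Taylor expansion (or the fundamental theorem of calculus) $u(t,x) = -\int_x^1 u_y(t,y)\,dy$, one gets $|u(t,x)|^2 \le (1-x)\int_x^1 |u_y|^2\,dy \le \ep\int_{1-\ep}^1|u_y|^2\,dy$ for $x\in(1-\ep,1)$, whence $\intw|u|^2\dd \le \ep^2\int_0^T\int_{1-\ep}^1|u_y|^2\,dy\,dt$. To extract the extra power of $\ep$ needed for the $\ep^{-3}$ weight, I would instead run the multiplier argument with $q(x) = (x-(1-\ep))_+$ rescaled, i.e. a weight vanishing linearly at $x=1-\ep$, which converts the localized energy $\int_0^T\int_{1-\ep}^1 x^\alpha|u_x|^2\,dx\,dt$ into a bound of order $\ep\,E(0)$; combined with the pointwise estimate this gives $\intw|u|^2\dd \le C\ep^3 E(0)$, as required. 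Uniformity of $C$ in $\ep\le\ep_0$ is automatic since all constants come from $\sup x^\alpha$, $\inf x^\alpha$ on $[1-\ep_0,1]$ and from $T$.

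The main obstacle I anticipate is making the multiplier computation rigorous near the degenerate endpoint for the \emph{strongly degenerate case} $\alpha\in[1,2)$: although the support of $q$ is away from $x=0$, the multiplier $qu_x$ must be a legitimate test function, and $u_x$ need not be in $L^2$ globally — only $x^{\alpha/2}u_x$ is. This is circumvented precisely because $q$ is supported in $[1-\ep_0,1]$ where $x^{\alpha/2}$ is bounded below, so $q u_x \in L^2(Q)$ and the boundary condition at $x=0$ never enters; still, one must justify the integration by parts for weak solutions via the smooth approximation and a careful limiting argument, checking that no boundary contribution at $x=1-\ep$ is lost and that the regularity $u_x(\cdot,1)\in L^2(0,T)$ is genuinely "hidden" rather than assumed. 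A secondary technical point is keeping track of lower-order terms $\int_0^T\int q'|u_t|^2$ and $\int q'' |u|^2$-type contributions and absorbing them into $E(0)$ with $\ep$-independent constants; this is routine once $q$ and its derivatives are uniformly bounded on $[1-\ep_0,1]$.
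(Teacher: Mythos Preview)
Your overall strategy---reduce the $\ep^{-3}$ bound on $u$ to an $\ep^{-1}$ bound on the localized energy $\int_0^T\int_{1-\ep}^1 x^\alpha|u_x|^2$ via the fundamental theorem of calculus---matches the paper's structure exactly (the paper states this localized energy bound as a separate theorem, then derives the present statement from it in a few lines, just as you outline). The Duhamel reduction to $f\equiv 0$ is unnecessary but harmless.

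The genuine gap is in your claim that the multiplier $q(x)=(x-(1-\ep))_+$ yields $\int_0^T\int_{1-\ep}^1 x^\alpha|u_x|^2\,dx\,dt \le C\ep\,E(0)$. Multiplying the equation by $2qu_x$ and integrating gives, after the standard manipulations,
\[
2\int_0^T\!\!\int_{1-\ep}^1 x^\alpha u_x^2\,q'\,dx\,dt
=\int_0^T u_x^2(t,1)\,q(1)\,dt
-\int_0^T\!\!\int_{1-\ep}^1 (u_t^2-x^\alpha u_x^2)\,q'\,dx\,dt + \text{(bounded terms)}.
\]
The boundary term is indeed $O(\ep)$ by hidden regularity, but the equipartition term $\int_0^T\int_{1-\ep}^1(u_t^2-x^\alpha u_x^2)$ is \emph{not} of order $\ep$: the energy only tells you it is bounded by $T\cdot E(0)$, which would give a localized energy bound of order $1$, not $\ep$. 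To cancel it, one must add the multiplier $q'u$; but with your piecewise-linear $q$, $q''$ is a Dirac mass at $x=1-\ep$, producing a boundary-type term $\int_0^T (1-\ep)^\alpha u_x(t,1-\ep)\,u(t,1-\ep)\,dt$ that cannot be estimated from the energy.

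The paper's resolution is considerably more delicate: it replaces $q$ by a $W^{2,\infty}$ function $\rho_\kappa$ (a linear piece on $(1-\delta,1)$ glued to a quadratic transition on $(1-\kappa,1-\delta)$), so that $\rho_\kappa''$ is bounded rather than singular; the resulting $\rho_\kappa''$ term is then controlled by a bootstrapping argument. One sets $G(\ep)=\ep^{-1}\int_0^T\int_{1-\ep}^1 x^\alpha u_x^2$, chooses $\delta_0$ to \emph{maximize} $G$ on $[0,\ep_0]$, and exploits the inequality $\kappa G(\kappa)-\delta_0 G(\delta_0)\le \gamma G(\delta_0)$ (from maximality) to close the estimate with a coefficient $<2$ in front of $G(\delta_0)$. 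This comparison argument over $\ep$ is the missing idea in your sketch; without it the multiplier computation does not close.
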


\begin{thm}\label{th2.3}
Given $0<\ep_0<1$, there exists $C>0$ such that, for all $(u_0,u_1)\in H_\alpha^1\times L^2(0,1)$ and $f\in L^1(0,T;L^2(0,1))$, if $u$ is a weak solution to \eqref{pb1}, then
\begin{equation*}
    \frac{1}{\ep}\intw |u_x(t,x)|^2 x^\alpha\dd  \leq C\left(\n{f}{L^1(0,T;L^2(0,1))}^2+\n{u_0}{H^1_\alpha}^2+\n{u_1}{L^2(0,1)}^2\right),\ \forall \ep \in (0,\ep _0 ], 
\end{equation*}
where $C$ only depends on $\ep_0$, $\alpha$ and $T$.
\end{thm}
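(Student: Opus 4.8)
The plan is to mimic the multiplier argument that underlies the classical hidden-regularity estimates, now adapted to the degenerate operator and, crucially, localized in the strip $(1-\ep,1)$. First I would reduce to smooth data: by the density of $H^2_\alpha\times H^1_\alpha$ in $H^1_\alpha\times L^2(0,1)$ and of smooth functions in $L^1(0,T;L^2(0,1))$, together with the a priori energy estimate $\|u\|_{C([0,T];H^1_\alpha)}+\|u_t\|_{C([0,T];L^2)}\le C(\|u_0\|_{H^1_\alpha}+\|u_1\|_{L^2}+\|f\|_{L^1(0,T;L^2)})$ from Definition \ref{weak}, it suffices to prove the inequality for strong solutions and then pass to the limit. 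For strong solutions all the integrations by parts below are justified, and the boundary terms at $x=0$ vanish because of the boundary conditions encoded in $H^1_\alpha$ (in the WDC $u(t,0)=0$, in the SDC $x^\alpha u_x(t,0)\to 0$, and in either case $x^{\alpha}|u_x|^2\to 0$ and $x^{\alpha/2}u_x\in L^2$ control the relevant limits).

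The core step is a multiplier identity: multiply the equation $u_{tt}-(x^\alpha u_x)_x=f$ by $m(x)u_x$, where $m\in C^1([0,1])$ is a fixed cutoff with $m\equiv 0$ near $x=0$, $m(1)=1$, and $m\ge 0$ — for instance $m$ supported in $[1-\ep_0,1]$ and equal to $1$ on $[1-\ep_0/2,1]$ (note: a \emph{fixed} cutoff, independent of $\ep$, is what makes the constant depend only on $\ep_0,\alpha,T$). Integrating over $Q$ and integrating by parts in $t$ and $x$ produces, after standard manipulations, an identity of the schematic form
\begin{equation*}
\tfrac12\int_0^T m(1)\,x^\alpha|u_x|^2\big|_{x=1}\,dt + \text{(interior terms)} = \Big[\int_0^1 u_t\,m\,u_x\,dx\Big]_0^T + \iint_Q f\,m\,u_x\,dxdt,
\end{equation*}
where the interior terms involve $\iint m'\,(u_t^2 + x^\alpha u_x^2)\,dxdt$ and $\iint m\,\alpha x^{\alpha-1} u_x^2\,dxdt$. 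All of these are bounded by the energy, hence by the right-hand side of the claimed inequality; in particular this yields the boundary regularity $\int_0^T x^\alpha|u_x(t,1)|^2\,dt \le C(\cdots)$, which is the degenerate analogue of the usual hidden regularity and is the key quantity. Here $x^\alpha$ is bounded above and below by positive constants on the support of $m$, so no degeneracy issue arises in this step.

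To get the $1/\ep$ factor I would instead use the ``sharp'' multiplier $(x-(1-\ep))\,\zeta(x)\,u_x$ with $\zeta$ a fixed cutoff that is $1$ on $[1-\ep_0,1]$ and $0$ near $x=0$, so that on $(1-\ep,1)$ the weight $(x-(1-\ep))$ ranges in $(0,\ep)$. The resulting identity gives
\begin{equation*}
\tfrac{\ep}{2}\int_0^T x^\alpha|u_x(t,1)|^2\,dt + \tfrac12\iint_{(1-\ep,1)} \big(u_t^2 + x^\alpha u_x^2 + \alpha x^{\alpha-1}(x-(1-\ep))u_x^2\big)\,dxdt = \text{(boundary-in-}t\text{)} + \text{(}\zeta'\text{ terms, supported away from }1\text{)} + \iint f\,(x-(1-\ep))\zeta\,u_x\,dxdt,
\end{equation*}
from which $\iint_{(1-\ep,1)}(u_t^2+x^\alpha u_x^2)\,dxdt\le C\ep\,(\|f\|_{L^1(0,T;L^2)}^2+\|u_0\|_{H^1_\alpha}^2+\|u_1\|_{L^2}^2)$, using the already-established bound on $\int_0^T x^\alpha|u_x(t,1)|^2dt$ to absorb the first term and Cauchy--Schwarz plus Young on the $f$-term; the $\zeta'$ terms are estimated crudely by $C\ep$ times the energy since they live on a fixed set where the integrand is controlled (or, more simply, are absorbed because on $(1-\ep,1)$ we have length $\ep$). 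Dividing by $\ep$ and discarding the nonnegative $u_t^2$ contribution gives exactly the stated inequality for $u_x$.

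The main obstacle I anticipate is twofold: (i) justifying the boundary terms at $x=0$ in the degenerate regime — in the SDC $\alpha\in[1,2)$ one only has $u$ locally absolutely continuous on $(0,1]$, so care is needed to show $\lim_{x\to0^+}x^\alpha m(x)|u_x(t,x)|^2=0$ and that no spurious boundary contribution appears; this is handled by the fact that $m$ vanishes identically near $0$, which sidesteps the degeneracy entirely provided the multiplier's cutoff is chosen fixed and supported in $[1-\ep_0,1]$. And (ii) bookkeeping the $\ep$-dependence so that the constant genuinely depends only on $\ep_0,\alpha,T$: this forces the cutoffs $m,\zeta$ to be $\ep$-independent and the only $\ep$-dependent factor to be the explicit weight $(x-(1-\ep))$, whose supremum on the strip is $\ep$. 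Once these points are pinned down, the rest is the routine multiplier computation and a density/limiting argument.
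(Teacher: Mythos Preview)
Your scheme has the right spirit, but the second multiplier you propose does not deliver the $O(\varepsilon)$ you claim. With $q(x)=(x-(1-\varepsilon))\zeta(x)$ and $\zeta$ a \emph{fixed} cutoff equal to $1$ on $[1-\varepsilon_0,1]$, the function $q$ is nonzero on all of $\operatorname{supp}\zeta$, and on $\operatorname{supp}\zeta\cap[0,1-\varepsilon]$ one only has $|q|\le C(\varepsilon_0)$, not $|q|\le\varepsilon$. Consequently the boundary-in-$t$ term $\big[\int_0^1 u_t\,q\,u_x\,dx\big]_0^T$ and the source term $\iint_Q f\,q\,u_x$ are merely $O(N_0)$, not $O(\varepsilon N_0)$. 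Your handling of the $\zeta'$-terms is likewise incorrect: $\zeta'$ is supported in a fixed set \emph{away} from $(1-\varepsilon,1)$ (where $\zeta\equiv1$), and there $|x-(1-\varepsilon)|$ is of order $1$, so ``$C\varepsilon$ times the energy'' and ``absorbed because on $(1-\varepsilon,1)$ we have length $\varepsilon$'' are both false for these terms. After dividing by $\varepsilon$ you would only get $G(\varepsilon)\le C N_0/\varepsilon$, which is useless.

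The fix is immediate: drop $\zeta$ and take $q(x)=(x-(1-\varepsilon))_+$. This already vanishes on $[0,1-\varepsilon]$, hence near $x=0$, is $W^{1,\infty}$ with $q'=\chi_{(1-\varepsilon,1)}$, and satisfies $0\le q\le\varepsilon$ everywhere. Then every right-hand term in the multiplier identity is genuinely $O(\varepsilon N_0)$ (using Proposition~\ref{prop-trace} for the trace term and $x\ge 1-\varepsilon_0$ on $\operatorname{supp}q$ for the others), and division by $\varepsilon$ gives the theorem. This corrected argument is in fact shorter than the paper's own proof: there the authors study $G(\varepsilon)=\tfrac1\varepsilon\int_0^T\!\int_{1-\varepsilon}^1 x^\alpha u_x^2$, locate its maximum $\delta_0$ on $[0,\varepsilon_0]$, dispose of the endpoints $\delta_0=0$ and $\delta_0\ge\varepsilon_0/2$ directly, and for $0<\delta_0<\varepsilon_0/2$ combine \emph{two} multiplier identities (with $2\rho_\kappa u_x$ and $\rho_\kappa' u$, for a tailored $\rho_\kappa\in W^{2,\infty}$) to derive a self-improving bound $2G(\delta_0)\le\tfrac32 G(\delta_0)+CN_0$. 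Your route, once repaired, avoids the case split and the second multiplier entirely.
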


We know that a weak solution $u$ of problem \eqref{pb1} satisfies $u_x(\cdot, 1)\in L^2(0,T)$, see Proposition \ref{prop-trace}. However, if we take $(u_0, u_1)\in L^2 (0,1) \times H^{-1}_\alpha$ (instead of taking it in $H^1_\alpha \times L^2 (0,1)$), we need to consider solutions $\tilde{u}=\tilde{u}(t,x)$ in a very weak sense. In this case, we have no information about the regularity of $\tilde{u}_x (\cdot ,1)$.

In our next result, as in the nondegenerate case, we will provide this kind of  regularity for a function $\varphi$ that is the limit of a sequence $(\varphi_\ep)$  of very weak solutions, under a condition of the $L^2$ norms of $(\varphi_\ep)$ near the boundary. In the following, we will precise this result.

Given $(g,z^0,z^1)\in L^1(0,T;L^2(0,1))\times L^2(0,1)\times H^{-1}_\alpha$, let us consider the problem
\begin{equation}
	\begin{cases}
	z_{tt}- \displaystyle \left(x^\alpha z_{x} \right)_x=g, & (t,x)\in Q,\\
	z(t,1)=0,& \text{ in } (0,T),\\
	\begin{cases}
	z(t,0)=0, &\text{if } \alpha \in (0,1),\\
	\text{or}& \\
	(x^\alpha z_{ x})(t,0)=0,&  \text{if } \alpha \in [1,2),
	\end{cases} &  t\in (0,T), \\
	z(0)=z^0 \text{ and } z_{t} (0)=z^1.
	\end{cases}
	\label{pb2}
	\end{equation}

In the following, we will present the definition of solutions for this problems that can be found in \cite[page 47]{lions1988controlabilite}.
\begin{defn}\label{trans}
 Given  $g\in L^1(0,T;L^2(0,1))$ and $(z^0,z^1)\in L^2(0,1)\times H^{-1}_\alpha $, we say  $z\in L^\infty(0,T;L^2(0,1))$  is a \textbf{very weak solution}  
(or a \textbf{solution by transposition}) of \eqref{pb2} if, for each $F\in \mathcal{D}(Q)$, 
\begin{equation*}
    \intq uF\,dxdt=-(z_0,\theta'(0))+\langle z_1,\theta(0)\rangle+\intq g\theta \,dxdt,
\end{equation*}
where $\theta = \theta (t,x)$ solves
\begin{equation*}
	\begin{cases}
	\theta_{tt}- \displaystyle \left(x^\alpha \theta_{x} \right)_x=F, & (t,x)\in Q,\\
	\theta(t,1)=0,& \text{ in } (0,T),\\
	\begin{cases}
	\theta (t,0)=0, &\text{if } \alpha \in (0,1),\\
	\text{or}& \\
	(x^\alpha \theta_{ x})(t,0)=0,&  \text{if } \alpha \in [1,2),
	\end{cases} &  t\in (0,T), \\
	  {\theta (T,x)=\theta_{t} (T,x)=0} & x\in  (0,1).
	\end{cases}
		\end{equation*}
   {Above, $\mathcal D (Q)$ denotes the real vector space of all smooth and compactly supported functions defined on $Q$.}
\end{defn}
For the well-posedness of \eqref{pb2}, in the sense of transposition, see Proposition \ref{well-pos-trans}. At this point, it is important to mention that one of the main differences between weak and very weak solutions are the following: the weak solution has the so called ``hidden regularity" (Proposition \ref{prop-trace}), while this property is not true for the very weak solution.

Our next result plays like a reciprocal of Theorem \ref{th2.3}, but, in fact, it is not really. Indeed, Theorem \ref{th3.1} concerns about a sequence of very weak solutions $\varphi_n$ that converges (in some weak sense) to a very weak solution $\varphi$. If this sequence satisfies a bound near the boundary, similar to that one got in the previous Theorem \ref{th2.3}, then the very weak solution $\varphi$ has a  hidden regularity.

Let us consider a family of functions $(h_\ep,\varphi^0_\ep,\varphi^1_\ep)\in L^1(0,T:L^2(\Omega))\times L^2(\Omega)\times H^{-1}_\alpha$ such that

\begin{equation*}
\begin{aligned}
   & h_\ep \rightharpoonup h && \text{ in } L^1(0,T:L^2(\Omega)),\\
    & \varphi_\ep^0 \rightharpoonup \varphi^0 &&  \text{ in } L^2(\Omega),\\
    & \varphi_\ep^1 \rightharpoonup \varphi^1 && \text{ in } H^{-1}_\alpha,
\end{aligned}
\end{equation*}
and let $\varphi_\ep$ be the solution by transposition of problem \eqref{pb2} with $(g,z^0,z^1)=(h_\ep,\varphi_\ep^0,\varphi_\ep^1)$.
Then $\varphi_\ep\in C^0([0,T];L^2(0,1))\cap C^1([0,T];H^{-1}_\alpha)$ and $\varphi_\ep \stackrel{\ast}{\rightharpoonup} \varphi$ in $L^\infty(0,T;L^2(0,1))$, where $\varphi$ is the solution by transposition of \eqref{pb2} with $(g,z^0,z^1)=(h,\varphi^0,\varphi^1)$.

\begin{thm}\label{th3.1}
Let $(h_\ep,\varphi^0_\ep,\varphi^1_\ep)\in L^1(0,T:L^2(\Omega))\times L^2(0,1)\times H^{-1}_\alpha$ be a family of functions as described above. If 
\begin{equation}\label{3.4}
    \frac{1}{\ep^3}\intw |\varphi_\ep(t,x)|^2 \dd \leq C,
\end{equation}
where $C$ does not depend on $\ep$, then $\varphi_x(\cdot,1)\in L^2(0,T)$ and 
\begin{equation}\label{liminf}
    \frac{1}{3}\n{\varphi_x(\cdot,1)}{L^2(0,T)}^2
    \leq \liminf_{\ep\to 0^+}\left(\frac{1}{\ep^3}\intw |\varphi_\ep (t,x)|^2\dd \right)
\end{equation}
\end{thm}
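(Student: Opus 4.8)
The plan is to identify the normal trace $\varphi_{\ep,x}(\cdot,1)$ — which for a very weak solution exists a priori only as a distribution on $(0,T)$ — by duality against two different test functions, one localized near $x=1$ and one global, and then to use \eqref{3.4} to bound the \emph{limiting} functional in $L^2(0,T)$, choosing the localized test function sharply so that the exact constant $\tfrac13$ of \eqref{liminf} appears. At each fixed $\ep$ I would first reduce to smooth data: since smooth triples are dense in $L^1(0,T;L^2(0,1))\times L^2(0,1)\times H^{-1}_\alpha$ and, by the well-posedness of \eqref{pb2} in the sense of transposition (Proposition \ref{well-pos-trans}), the transposition solution and all the pairings below depend continuously on the data, it is enough to prove the identities and estimates for classical solutions and then pass to the limit; this is also what \emph{defines} $\varphi_{\ep,x}(\cdot,1)$ in the general case.

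Fix $\mu\in\mathcal D(0,T)$ and let $\zeta_\ep\in W^{2,\infty}(0,1)$ be supported in $[1-\ep,1]$ with $\zeta_\ep(1)=1$, chosen so that $(x^\alpha\zeta_\ep'(x))'=\tfrac{3}{\ep^3}(1-x)+R_\ep(x)$ on $[1-\ep,1]$, with $|R_\ep|\le C_\alpha/\ep$; concretely $\zeta_\ep(x)=\tfrac{3}{2\ep}\big(x-(1-\ep)\big)+\tfrac{1}{2\ep^3}\big((1-x)^3-\ep^3\big)$ on $[1-\ep,1]$, extended by $0$, is $C^1$ across $x=1-\ep$, satisfies $0\le\zeta_\ep\le1$, and automatically gives $\zeta_\ep(1)=1$. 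Testing $\varphi_{\ep,tt}-(x^\alpha\varphi_{\ep,x})_x=h_\ep$ against $\Theta_\ep(t,x):=\mu(t)\zeta_\ep(x)$ and integrating by parts in $Q$ — the time–boundary terms vanish because $\mu$ is compactly supported in $(0,T)$, the terms at $x=1-\ep$ vanish because $\Theta_\ep$ and $\Theta_{\ep,x}$ do there, and there is no contribution at $x=0$ since $\Theta_\ep\equiv0$ near $x=0$, so the degeneracy plays no role — gives
\begin{equation*}
\int_0^T\varphi_{\ep,x}(t,1)\,\mu(t)\,dt=\intq\varphi_\ep\big(\mu''\zeta_\ep-\mu\,(x^\alpha\zeta_\ep')'\big)\dd-\intq h_\ep\,\mu\,\zeta_\ep\dd .
\end{equation*}
The dominant contribution is $-\tfrac{3}{\ep^3}\intw\varphi_\ep\,\mu\,(1-x)\dd$, and since $\int_{1-\ep}^1(1-x)^2\,dx=\ep^3/3$, Cauchy--Schwarz yields
\begin{equation*}
\Big|\tfrac{3}{\ep^3}\intw\varphi_\ep\,\mu\,(1-x)\dd\Big|\le\sqrt3\,\Big(\tfrac{1}{\ep^3}\intw|\varphi_\ep|^2\dd\Big)^{1/2}\n{\mu}{L^2(0,T)};
\end{equation*}
this is where the factor $3$ comes from. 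The remaining three terms tend to $0$ as $\ep\to0^+$: using $\|\zeta_\ep\|_{L^2(1-\ep,1)}^2\le\ep$, $|R_\ep|\le C_\alpha/\ep$, the bound \eqref{3.4}, and $M:=\sup_\ep\n{h_\ep}{L^1(0,T;L^2(0,1))}<\infty$ (finite because $(h_\ep)$ converges weakly), they are controlled respectively by $\ep^2\sqrt C\,\n{\mu''}{L^2(0,T)}$, by $C_\alpha\sqrt{TC}\,\ep\,\n{\mu}{L^\infty(0,T)}$ and by $M\sqrt\ep\,\n{\mu}{L^\infty(0,T)}$.

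To pass to the limit I would use a second representation of the same quantity. Let $\eta_0\in C^\infty([0,1])$ with $\eta_0\equiv0$ near $x=0$ and $\eta_0(1)=1$, and let $\tilde\theta_\mu\in C([0,T];H^1_\alpha)\cap C^1([0,T];L^2(0,1))$ solve the backward equation $\tilde\theta_{\mu,tt}-(x^\alpha\tilde\theta_{\mu,x})_x=-\mu''\eta_0+\mu\,(x^\alpha\eta_0')'\in L^2(Q)$ with the boundary conditions of \eqref{pb2} and $\tilde\theta_\mu(T)=\tilde\theta_{\mu,t}(T)=0$; then $\theta_\mu:=\mu\,\eta_0+\tilde\theta_\mu$ solves $\theta_{\mu,tt}-(x^\alpha\theta_{\mu,x})_x=0$ with $\theta_\mu(t,1)=\mu(t)$, vanishing final data, $\theta_\mu(0)=\tilde\theta_\mu(0)\in H^1_\alpha$ and $\theta_{\mu,t}(0)=\tilde\theta_{\mu,t}(0)\in L^2(0,1)$ (because $\mu(0)=\mu'(0)=0$). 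Applying Definition \ref{trans} with $F=-\mu''\eta_0+\mu(x^\alpha\eta_0')'$ (extended to right-hand sides in $L^1(0,T;L^2(0,1))$; the associated backward solution is $\tilde\theta_\mu$) and using that $\varphi_\ep$ solves \eqref{pb2} with right-hand side $h_\ep$, one obtains after integration by parts (all $x=0$ terms vanishing because $\eta_0\equiv0$ there)
\begin{equation*}
\int_0^T\varphi_{\ep,x}(t,1)\,\mu(t)\,dt=\big(\varphi_\ep^0,\theta_{\mu,t}(0)\big)_{L^2(0,1)}-\langle\varphi_\ep^1,\theta_\mu(0)\rangle_{H^{-1}_\alpha,H^1_\alpha}-\intq h_\ep\,\theta_\mu\dd .
\end{equation*}
For this fixed $\mu$ the right-hand side converges, as $\ep\to0^+$, to the number $\ell(\mu)$ obtained by replacing $(\varphi_\ep^0,\varphi_\ep^1,h_\ep)$ by $(\varphi^0,\varphi^1,h)$ — using $\varphi_\ep^0\rightharpoonup\varphi^0$ in $L^2(0,1)$, $\varphi_\ep^1\rightharpoonup\varphi^1$ in $H^{-1}_\alpha$ and $h_\ep\rightharpoonup h$ in $L^1(0,T;L^2(0,1))$ against the \emph{fixed} elements $\theta_{\mu,t}(0)$, $\theta_\mu(0)$, $\theta_\mu\in L^\infty(0,T;L^2(0,1))$ — and $\ell(\mu)$ is precisely the value of the normal trace $\varphi_x(\cdot,1)$ of the transposition solution $\varphi$ against $\mu$. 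Combining this with the first-step bound and taking $\liminf_{\ep\to0^+}$,
\begin{equation*}
|\ell(\mu)|\le\sqrt3\,\Big(\liminf_{\ep\to0^+}\tfrac{1}{\ep^3}\intw|\varphi_\ep|^2\dd\Big)^{1/2}\n{\mu}{L^2(0,T)}\qquad\text{for all }\mu\in\mathcal D(0,T),
\end{equation*}
so, by density of $\mathcal D(0,T)$ in $L^2(0,T)$ and the Riesz representation theorem, $\varphi_x(\cdot,1)\in L^2(0,T)$ and $\n{\varphi_x(\cdot,1)}{L^2(0,T)}^2\le 3\,\liminf_{\ep\to0^+}\tfrac1{\ep^3}\intw|\varphi_\ep|^2\dd$, which is \eqref{liminf}.

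The main obstacle I anticipate is the careful treatment of the distributional normal trace of the very weak solutions: justifying the first (localized) identity for non-smooth data by approximation, checking it is consistent with the global one — this is where the uniqueness built into Definition \ref{trans}/Proposition \ref{well-pos-trans} is used — and verifying that the limit functional $\ell$ genuinely coincides with the normal trace of $\varphi$. A second delicate point is the choice of $\zeta_\ep$: producing exactly the factor $3$ rather than an unspecified constant forces the explicit ``near-linear'' profile above together with the verification that the degenerate remainder $R_\ep$ is truly of lower order, which holds because the strip $[1-\ep,1]$ stays away from the degenerate point $x=0$.
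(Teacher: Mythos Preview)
Your argument is correct and produces the sharp constant, but it follows a genuinely different route from the paper's proof.

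The paper never tests the equation for $\varphi_\ep$ directly. Instead it \emph{lifts in time}: setting $\Phi_\ep(t,x)=\int_0^t\varphi_\ep(s,x)\,ds+\Phi_\ep^0(x)$, where $(x^\alpha\Phi^0_{\ep x})_x=\varphi_\ep^1$, turns the very weak solution into a weak solution, so $\Phi_{\ep x}(\cdot,1)\in L^2(0,T)$ by hidden regularity and $\varphi_{\ep x}(\cdot,1):=\Phi_{\ep x t}(\cdot,1)\in H^{-1}(0,T)$ is defined and converges weakly to $\varphi_x(\cdot,1)$ in $H^{-1}(0,T)$. The key test function is the simplest one, $w(t,x)=(1-x)u(t)$; writing $\langle\varphi_{\ep x}(\cdot,1),u\rangle=A_\ep+B_\ep$ with $A_\ep=-\tfrac{3}{\ep^3}\intw\varphi_\ep w\,\dd$ gives the same Cauchy--Schwarz bound you obtain, while $B_\ep$ collects the mismatch between $-\int_0^T\Phi_{\ep x}(t,1)u'(t)\,dt$ and $\tfrac{3}{\ep^3}\intw\Phi_\ep w_t\,\dd$. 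Showing $B_\ep\to 0$ requires a \emph{second} lifting $\Psi_\ep$ (time primitive of $\Phi_\ep$) in order to get continuity of $x\mapsto\Phi_{\ep x}(\cdot,x)$ with values in $H^{-1}(0,T)$ on $[1-\ep,1]$.

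Your approach avoids both liftings. By engineering $\zeta_\ep$ so that $(x^\alpha\zeta_\ep')'=\tfrac{3}{\ep^3}(1-x)+O(1/\ep)$ and $\zeta_\ep(1)=1$, $\zeta_\ep'(1-\ep)=0$, you get the localized identity in one integration by parts, and the error terms die by elementary size estimates rather than by a continuity argument. For the passage to the limit you replace the paper's ``$\varphi_{\ep x}(\cdot,1)\rightharpoonup\varphi_x(\cdot,1)$ in $H^{-1}(0,T)$'' by a global transposition identity against the solution $\theta_\mu$ of the nonhomogeneous boundary problem, which is stable under the weak convergence of the data. The trade-off is clear: the paper's method uses only the plain multiplier $(1-x)u(t)$ but pays with a two-step time-integration machinery and a continuity lemma for $\Phi_{\ep x}(\cdot,x)$; your method is more self-contained and works entirely at the level of $\varphi_\ep$, at the cost of constructing the tailored profile $\zeta_\ep$ and the auxiliary adjoint $\theta_\mu$. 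The one point where you must be careful---and you flag it---is that the ``trace'' $\ell(\mu)$ you produce via $\theta_\mu$ coincides with the object $\varphi_x(\cdot,1)$ the paper defines by lifting; this is settled by approximation with data in $H^1_\alpha\times L^2(0,1)$, where both expressions reduce to the classical boundary integral.
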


\begin{rem}
In this paper, we consider weakly and strongly degenerate wave equations, with Dirichlet and Neumann boundary conditions, respectively. However, we should emphasize that these two different situations can be treated simultaneously. Indeed, each integration by parts provides boundary terms which vanish in both cases at $x=0$, while the boundary condition is the same at $x=1$.
\end{rem}


%


%

The remainder of this paper is organized as follows: in Section \ref{sec-pre} we present some well posedness results concerning the weak solution and the very weak solution of \eqref{pb1}. The proof of the Theorems \ref{th2.1} and \ref{th2.3} are discussed in Section \ref{sec-weak}. The paper ends with the Section \ref{sec-very-weak}, where we present the proof of Theorem \ref{th3.1}.

\section{Preliminaries}\label{sec-pre}

As expected from a paper like this, we start presenting some well-posedness results for the hiperbolic degenerate equation \eqref{pb1}. Most of these results are well known and can be found in \cite{cannarsa2015wavecontrol} or in \cite{zhang2017null}. The first result is concerning the weak solution for \eqref{pb1} and it was established in \cite{cannarsa2015wavecontrol} using a semigroup approach.

\begin{prop}\label{well-pos}
	Given  $f\in L^1(0,T;L^2(0,1))$ and $(u_0,u_1)\in H^1_\alpha\times L^2(0,1)$, there exists a unique weak solution $u\in C^0([0,T];H^1_\alpha)\cap C^1([0,T];L^2(0,1))$ of \eqref{pb1}.	In addition, there exists a positive constant $C_{T,\alpha}$ such that
	\begin{equation}\label{ineq1}
	\sup_{t\in[0,T]}\left( \n{u_t(t)}{L^2(0,1)}^2+\n{u(t)}{H_\alpha^1}^2 \right) 
	\leq C_{T,\alpha}\left(\n{f}{L^1(0,T;L^2(0,1))}^2+\n{u_0}{H_\alpha^1}^2 +\n{u_1}{L^2(0,1)}^2\right).
	\end{equation}
\end{prop}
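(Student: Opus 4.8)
The plan is to recast \eqref{pb1} as a first-order abstract Cauchy problem and invoke semigroup theory, following \cite{cannarsa2015wavecontrol}. Writing $U=(u,u_t)$, the system becomes $U'=\mathcal{A}U+F$, $U(0)=(u_0,u_1)$, in the Hilbert space $\mathcal{H}=H^1_\alpha\times L^2(0,1)$, where $F=(0,f)$ and $\mathcal{A}(v,w)=\big(w,(x^\alpha v_x)_x\big)$, the domain $D(\mathcal{A})$ consisting of those $(v,w)$ with $w\in H^1_\alpha$ and $v$ in the domain $H^2_\alpha$ of $Av:=(x^\alpha v_x)_x$ (the boundary conditions at $x=0$ and $x=1$ being encoded in $H^1_\alpha$ and $H^2_\alpha$). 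The first thing I would check is the integration-by-parts identity
\[
\int_0^1 (x^\alpha v_x)_x\,\zeta\,dx=-\int_0^1 x^\alpha v_x\,\zeta_x\,dx,\qquad v\in H^2_\alpha,\ \zeta\in H^1_\alpha,
\]
in which the boundary term $[x^\alpha v_x\zeta]_0^1$ vanishes: at $x=1$ because $\zeta(1)=0$, and at $x=0$ because $x^\alpha v_x\zeta=x^{\alpha/2}\zeta\cdot\big(x^{\alpha/2}v_x\big)\to0$ in the weakly degenerate case (there $x^{\alpha/2}\to0$ while $\zeta$ and $x^{\alpha/2}v_x$ stay controlled near the origin) and because $(x^\alpha v_x)(0)=0$ in the strongly degenerate case. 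This identity shows that $A$ is self-adjoint and nonpositive on $L^2(0,1)$ and hence, equipping $\mathcal{H}$ with the energy inner product (equivalent to the standard one by the weighted Poincaré/Hardy inequality $\n{v}{L^2(0,1)}\le C_\alpha\n{x^{\alpha/2}v_x}{L^2(0,1)}$ valid on $H^1_\alpha$), that $\mathcal{A}$ is skew-adjoint: $(\mathcal{A}U,U)_{\mathcal{H}}=0$, and both $I\pm\mathcal{A}$ are onto, the latter reducing by Lax--Milgram on $H^1_\alpha$ to solvability of $v-Av=\text{data}$.

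By Stone's theorem $\mathcal{A}$ generates a $C_0$-group of unitary operators $(S(t))_{t\in\mathbb{R}}$ on $\mathcal{H}$. Since $F\in L^1(0,T;\mathcal{H})$, the Duhamel formula $U(t)=S(t)(u_0,u_1)+\int_0^t S(t-s)F(s)\,ds$ defines the unique mild solution, $U\in C^0([0,T];\mathcal{H})$; reading off the two components gives $u\in C^0([0,T];H^1_\alpha)$ and $u_t\in C^0([0,T];L^2(0,1))$, i.e. $u\in C^1([0,T];L^2(0,1))$. For data $(u_0,u_1)\in D(\mathcal{A})$ and $f\in C^1([0,T];L^2(0,1))$ this $U$ is a strong solution of the ODE, so $u$ solves \eqref{pb1} pointwise; multiplying the equation by an admissible test function $\varphi$ as in Definition \ref{weak}(b) and integrating by parts in $Q$ — using $\varphi(T,\cdot)=0$ for the time integration and the endpoint cancellation above for the space integration — yields exactly identity (b), while (a) is built in. For general data one approximates $(u_0,u_1,f)$ by such smooth data and passes to the limit using the continuity of the solution map, obtaining that the mild solution is a weak solution. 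Uniqueness follows from linearity together with the energy estimate below (zero data forces zero energy and $u(0)=0$, hence $u\equiv0$).

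Finally, to obtain \eqref{ineq1}, multiply $u_{tt}-(x^\alpha u_x)_x=f$ by $u_t$, integrate over $(0,1)$ and use the endpoint cancellation to get, for $E(t):=\tfrac12\n{u_t(t)}{L^2(0,1)}^2+\tfrac12\n{x^{\alpha/2}u_x(t)}{L^2(0,1)}^2$,
\[
E'(t)=\int_0^1 f(t,x)u_t(t,x)\,dx\le \n{f(t)}{L^2(0,1)}\n{u_t(t)}{L^2(0,1)}\le \sqrt{2E(t)}\;\n{f(t)}{L^2(0,1)},
\]
hence $\frac{d}{dt}\sqrt{E(t)}\le\tfrac1{\sqrt2}\n{f(t)}{L^2(0,1)}$ and $\sqrt{E(t)}\le\sqrt{E(0)}+\tfrac1{\sqrt2}\n{f}{L^1(0,T;L^2(0,1))}$ on $[0,T]$. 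Using the Poincaré/Hardy inequality once more to bound $\n{u(t)}{H^1_\alpha}^2\le(1+C_\alpha^2)\n{x^{\alpha/2}u_x(t)}{L^2(0,1)}^2\le 2(1+C_\alpha^2)E(t)$ and then squaring gives \eqref{ineq1} with $C_{T,\alpha}$ depending only on $T$ and $\alpha$; a density argument extends it from smooth to arbitrary admissible data. The main obstacle in this scheme is the very first step — the endpoint cancellation at $x=0$, equivalently the self-adjointness of $A$ on $L^2(0,1)$ — which is precisely where the different boundary conditions in the weakly and strongly degenerate cases are needed; once that is settled, the semigroup generation and the energy estimate are routine.
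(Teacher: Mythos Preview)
The paper does not actually supply a proof of this proposition: it only states the result and refers to \cite{cannarsa2015wavecontrol}, remarking that it is obtained there via a semigroup approach. Your proposal spells out precisely that semigroup argument (skew-adjoint generator on $H^1_\alpha\times L^2(0,1)$, Stone's theorem, Duhamel, energy identity), and it is correct, so your approach coincides with the one the paper invokes.
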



Associated to \eqref{pb1}, we have the following energy functional
\begin{equation*}
    E(t):=\frac{1}{2}\into (|u_t(t,x)|^2+x^\alpha |u_x(t,x)|^2) \,dx,
\end{equation*}
where $t\in (0,T)$. The previous result establishes that $$E(t)\leq C[\n{f}{L^1(0,T;L^2(0,1))}^2+E(0)],$$ a expected fact for hiperbolic equations. The next result is known by ``hidden regularity", another inherited property from the hiperbolic equations. Like the previous one, this results was also discussed in \cite{cannarsa2015wavecontrol} and \cite{zhang2017null}.

\begin{prop}\label{prop-trace}
    For any weak solution $u$ of \eqref{pb1}, we have $u_x(\cdot,1)\in L^2(0,T)$ and 
    \begin{equation}
        \int_0^T|u_x(t,1)|^2\,dt\leq C\left(\|f\|_{L^1(0,T;L^2(0,1))}^2+E(0)\right)
    \end{equation}
\end{prop}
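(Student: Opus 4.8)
The plan is to prove this direct (hidden regularity) inequality by the multiplier method, testing \eqref{pb1} against the multiplier $x\,u_x$; this is the degenerate counterpart of the classical multiplier $(x-x_0)u_x$ used for the nondegenerate wave equation. Because a generic weak solution is not regular enough to legitimize the integrations by parts that follow, I would first carry out the computation for \emph{strong} solutions, namely for data $(u_0,u_1)\in H^2_\alpha\times H^1_\alpha$ with, say, $f\in C^1([0,T];L^2(0,1))$, for which $u$ is regular enough and all the boundary quantities at $x=0$ are meaningful. The general statement then follows by the density of this class in $H^1_\alpha\times L^2(0,1)$ (respectively in $L^1(0,T;L^2(0,1))$) together with the continuity of both sides with respect to the data, the left-hand side being controlled precisely by the estimate proved on the dense subspace.

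For a strong solution I would multiply $u_{tt}-(x^\alpha u_x)_x=f$ by $x\,u_x$ and integrate over $Q$. Integrating by parts in $t$ in $\intq u_{tt}\,x u_x\dd$ and in $x$ in $-\intq (x^\alpha u_x)_x\,x u_x\dd$, and using that $u(t,1)=0$ implies $u_t(t,1)=0$ together with $q(1)=1$, $q(0)=0$ for $q(x)=x$, one is led to an identity of the form
\begin{equation*}
\frac12\int_0^T |u_x(t,1)|^2\,dt = \Big[\into u_t\,x\,u_x\,dx\Big]_0^T + \frac12\intq |u_t|^2\dd + \frac{1-\alpha}{2}\intq x^\alpha |u_x|^2\dd - \intq f\,x\,u_x\dd + B_0,
\end{equation*}
where $B_0=\int_0^T\lim_{x\to 0^+}\tfrac12 x^{\alpha+1}|u_x(t,x)|^2\,dt$ gathers the boundary contribution at the degenerate endpoint. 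The coefficient $1/2$ in front of $\int_0^T|u_x(t,1)|^2\,dt$ comes exactly from the term $-[\,x^\alpha q\,u_x^2/2\,]_0^1$ evaluated at $x=1$, while $q'\equiv 1$ generates the two interior integrals.

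It then remains to bound each term on the right by $C(\n{f}{L^1(0,T;L^2(0,1))}^2+E(0))$. Writing $x=x^{1-\alpha/2}x^{\alpha/2}$ with $x^{1-\alpha/2}\le 1$ on $[0,1]$ (as $\alpha<2$), every occurrence of $x\,u_x$ is dominated by $x^{\alpha/2}u_x$, so the endpoint term at $t\in\{0,T\}$ is bounded by $E(T)+E(0)$, the two interior integrals by $C\int_0^T E(t)\,dt\le CT\sup_{[0,T]}E$, and the source term by $C\big(\sup_{[0,T]}E\big)^{1/2}\n{f}{L^1(0,T;L^2(0,1))}$; invoking the energy bound \eqref{ineq1} of Proposition \ref{well-pos} (so that $\sup_{[0,T]}E\le C(\n{f}{L^1(0,T;L^2(0,1))}^2+E(0))$) and Young's inequality closes these estimates. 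For the boundary term $B_0$ I would argue that it vanishes: since $x^\alpha u_x=x^{\alpha/2}\cdot x^{\alpha/2}u_x\in L^2(0,1)$ and $(x^\alpha u_x)_x\in L^2(0,1)$ for a strong solution, $x^\alpha u_x\in H^1(0,1)\hookrightarrow C([0,1])$, and writing $x^{\alpha+1}|u_x|^2=x^{1-\alpha}(x^\alpha u_x)^2$ one gets in the (WDC) that $x^{1-\alpha}\to0$ while $(x^\alpha u_x)^2$ stays bounded, whereas in the (SDC) the Neumann condition $(x^\alpha u_x)(t,0)=0$ together with $|x^\alpha u_x(x)|\le x^{1/2}\|(s^\alpha u_s)_s\|_{L^2}$ yields $x^{\alpha+1}|u_x|^2\le x^{2-\alpha}\|(s^\alpha u_s)_s\|_{L^2}^2\to0$; in either regime $B_0=0$.

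I expect the delicate part to be twofold, and both issues live at the degenerate endpoint $x=0$: first, establishing rigorously that $B_0=0$ for strong solutions, which is where the distinction between the weakly and strongly degenerate cases and the precise boundary conditions genuinely enter; and second, the justification of the passage from strong to weak solutions by density, i.e. checking that the trace map $(u_0,u_1,f)\mapsto u_x(\cdot,1)$, a priori defined only on the regular class, extends continuously to all weak solutions and that this extension coincides with the distributional normal derivative of the weak solution. The interior and time-boundary estimates, by contrast, are routine once the energy inequality \eqref{ineq1} is available.
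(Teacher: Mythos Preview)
Your multiplier argument with $q(x)=x$ is correct and is exactly the standard proof of hidden regularity for the degenerate operator; the identity you obtain, the treatment of the degenerate boundary term $B_0$ via $x^{\alpha+1}|u_x|^2=x^{1-\alpha}(x^\alpha u_x)^2$ (WDC) and the Neumann condition (SDC), and the density passage are all sound. The paper, however, does not prove this proposition at all: its entire proof is the one-line citation ``See \cite[Proposition 2.5]{zhang2017null}''. What you have written is essentially the argument carried out in that reference (and in \cite{cannarsa2015wavecontrol}), so there is no genuine methodological difference to compare---you have simply supplied the proof that the paper outsources.
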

\begin{proof}
See \cite[Proposition 2.5]{zhang2017null}.
\end{proof}

Now we will discuss the very weak solution for \eqref{pb1}. The way to obtain a well posedness result with less regular data $(z^0,z^1)\in L^2(0,1)\times H^{-1}_\alpha$ is very similar to that used in \cite{cannarsa2015wavecontrol}. Despite that, this result was not analyzed in the papers that we read. So, we will give just a sketch of the proof. First of all, for $u\in H_\alpha^1$ we define $-(x^\alpha u_x)_x\in H_\alpha^{-1}$ by 
\[\langle-(x^\alpha u_x)_x,v\rangle_{H_\alpha^{-1}}=\int_0^1x^\alpha u_xv_x\,dx \ \ \forall v\in H_\alpha^1.\]
Then we define the Hilbert space $Y=L^2(0,1)\times H_\alpha^{-1}$ and the operator $B:D(B)\longrightarrow Y$ given by $$B(u,v)=(-v,-(x^\alpha u_x)_x),$$ where $D(B)=H_\alpha^1\times L^2(0,1)\subset Y$. It is not difficult to see that $(B(U), U)_Y=0 \ \forall U\in D(B)$. In particular, $B$ is a accretive operator. Furthermore, from Lax Milgram Theorem we can deduce that $B$ is M-accretive. It follows that $B$ is skew-adjoint. From semigroup theory we deduce that $B$ is a generator of a semigroup of contractions and this lead us to the following well posedness result:

\begin{prop}\label{well-pos-trans}
	Given  $g\in L^1(0,T;L^2(0,1))$ and $(z^0,z^1)\in L^2(0,1)\times H^{-1}_\alpha$, there exists a unique solution by transposition  $z\in C^0([0,T];L^2(0,1))\cap C^1([0,T];H^{-1}_\alpha)$ of \eqref{pb2}.	In addition, there exists a positive constant $C_{T,\alpha}$ such that
	\begin{equation}\label{ineq-2.3}
	\sup_{t\in[0,T]}\left( \n{z(t)}{L^2(0,1)}^2+\n{z_t(t)}{H_\alpha^{-1}} \right) 
	\leq C_{T,\alpha}\left(\n{g}{L^1(0,T;L^2(0,1))}^2+\n{z^1}{H_\alpha^{-1}}^2 +\n{z^0}{L^2(0,1)}^2\right).
	\end{equation}
\end{prop}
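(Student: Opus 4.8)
The plan is to read \eqref{pb2} as a first-order abstract Cauchy problem governed by the operator $B$ built above, deduce the asserted regularity and the bound \eqref{ineq-2.3} directly from the group generated by $B$, and only then verify that the resulting trajectory is the solution by transposition prescribed in Definition \ref{trans}. This two-step structure is natural because the semigroup theory delivers more than the transposition definition by itself would (the continuity in time and the control of $z_t$ in $H^{-1}_\alpha$), while the transposition identity is what pins down uniqueness.

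First I would set $U=(z,z_t)$, $U^0=(z^0,z^1)\in Y=L^2(0,1)\times H^{-1}_\alpha$ and $G=(0,g)$, so that \eqref{pb2} becomes
\begin{equation*}
U'(t)+BU(t)=G(t),\quad t\in(0,T),\qquad U(0)=U^0.
\end{equation*}
Since $g\in L^1(0,T;L^2(0,1))$ and $L^2(0,1)\hookrightarrow H^{-1}_\alpha$ continuously, one has $G\in L^1(0,T;Y)$ with $\|G(t)\|_Y=\n{g(t)}{H^{-1}_\alpha}\le C\,\n{g(t)}{L^2(0,1)}$. As $B$ is skew-adjoint, Stone's theorem furnishes a $C_0$-group of isometries $(S(t))_{t\in\R}$ on $Y$, and the variation-of-constants formula
\begin{equation*}
U(t)=S(t)U^0+\int_0^t S(t-s)G(s)\,ds
\end{equation*}
defines the unique mild solution, which belongs to $C^0([0,T];Y)$. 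Reading off the two coordinates gives $z\in C^0([0,T];L^2(0,1))$ and $z_t\in C^0([0,T];H^{-1}_\alpha)$, that is, $z\in C^0([0,T];L^2(0,1))\cap C^1([0,T];H^{-1}_\alpha)$; and, since $\|S(\tau)\|_{\mathcal L(Y)}\le 1$,
\begin{equation*}
\|U(t)\|_Y\le \|U^0\|_Y+\int_0^t\|G(s)\|_Y\,ds\le C\left(\n{z^0}{L^2(0,1)}+\n{z^1}{H^{-1}_\alpha}+\n{g}{L^1(0,T;L^2(0,1))}\right),
\end{equation*}
which, upon squaring and recalling $\|U(t)\|_Y^2=\n{z(t)}{L^2(0,1)}^2+\n{z_t(t)}{H^{-1}_\alpha}^2$, yields \eqref{ineq-2.3}.

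It then remains to identify this $U$ with a solution by transposition. For data smooth enough that $z$ is a strong solution, I would fix $F\in\mathcal D(Q)$, let $\theta$ be the associated backward solution from Definition \ref{trans}, multiply the equation by $\theta$ and integrate by parts twice in $x$ and twice in $t$. The spatial boundary terms cancel at $x=1$ (where $z=\theta=0$) and at $x=0$ (by the Dirichlet condition in the WDC or the flux condition $(x^\alpha(\,\cdot\,)_x)(t,0)=0$ in the SDC), exactly as noted in the Remark, while $\theta(T)=\theta_t(T)=0$ kills the contributions at $t=T$; what survives is precisely the identity of Definition \ref{trans}, with boundary terms $-(z^0,\theta_t(0))_{L^2(0,1)}+\langle z^1,\theta(0)\rangle_{H^{-1}_\alpha,H^1_\alpha}$. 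To pass to general $(g,z^0,z^1)$ I would use a density argument: well-posedness of the backward adjoint problem—which follows from Proposition \ref{well-pos} after the time reversal $t\mapsto T-t$, the equation being reversible through the group—gives $\theta\in C^0([0,T];H^1_\alpha)\cap C^1([0,T];L^2(0,1))$ depending continuously on $F$ in $L^1(0,T;L^2(0,1))$, so that $\theta(0)$, $\theta_t(0)$ and $\theta$ are controlled and each of the three pairings is stable under approximation of the data; the continuous dependence of the mild solution then lets the left-hand side converge as well. Uniqueness is immediate from the identity itself: two transposition solutions in $L^\infty(0,T;L^2(0,1))$ with the same data satisfy $\intq (z_1-z_2)F\dd=0$ for all $F\in\mathcal D(Q)$, whence $z_1=z_2$.

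The step I expect to be most delicate is this reconciliation of the two notions of solution: one must keep careful track of every boundary term at the degenerate endpoint $x=0$ and confirm that it vanishes \emph{simultaneously} in the weakly and strongly degenerate regimes, and one must match the approximation of $(g,z^0,z^1)$ by smooth data to the continuity of $F\mapsto(\theta(0),\theta_t(0),\theta)$ coming from the adjoint energy estimate, since this is what makes the two endpoint pairings converge. By contrast, the passage from the mild-solution bound to the exact form of \eqref{ineq-2.3}—including the harmless discrepancy between the squared and unsquared $H^{-1}_\alpha$ norm in its statement—is entirely routine.
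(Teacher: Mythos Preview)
Your proposal is correct and follows the same semigroup approach as the paper: the paper's ``proof'' is only the sketch preceding the statement, which sets up the operator $B$ on $Y=L^2(0,1)\times H^{-1}_\alpha$, observes it is skew-adjoint and hence generates a contraction semigroup, and leaves the rest implicit. You supply more detail than the paper does---in particular the explicit variation-of-constants formula, the identification of the mild solution with the transposition solution, and the uniqueness argument---but the core strategy is identical.
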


\section{Regularity for weak solutions}\label{sec-weak}

The aim of this section is to prove Theorems \ref{th2.1} and \ref{th2.3}. They bring $L^2$ estimates for weak solutions of system \eqref{pb1} and its derivative, in a $\varepsilon-$neighborhood of the boundary $x=1$.


  We observe that  Theorem \ref{th2.1} will be obtained as a consequence of Theorem \ref{th2.3}. First, let us introduce a suitable function that will be used to prove Theorem \ref{th2.3}.

 Given $\delta>0$ and $\gamma\in(0,\delta)$, for $\kappa=\delta+\gamma$, define $\rho_\kappa:[0,1]\to\mathbb{R}$ by \begin{equation}\label{rhokappa}\rho_\kappa(x)=\begin{cases}
0, & 0\leq x \leq 1-\kappa;\\
\frac{1}{2\delta\gamma}(x-(1-\kappa))^2, & 1-\kappa<x<1-\delta; \\
\frac{1}{\delta}(x-1)+1+\frac{\gamma}{2\delta}, & 1-\delta\leq x\leq 1.
\end{cases}\end{equation}

\begin{figure}[h]
    \centering
   \begin{tikzpicture}
\draw[->] (0,0) -- (6,0) node[below right] {$x$};
\draw[->] (0,0) -- (0,3);
\draw[very thick, green] (0,0) -- (2,0) node[below,black] {$1-\kappa$};
\draw[domain=2:4, smooth, variable=\x, red,very thick]  plot ({\x},{0.2*(\x-2)*(\x-2)}) ;
  \draw[very thick,blue] (4,0.8) -- (5.5,2); 
 \node[below] at (4,0) {$1-\delta$};
\draw[dashed] (4,0) -- (4,0.8) -- (0,0.8) node[left] {$\frac{\gamma}{2\delta}$};
\draw[dashed] (5.5,0) node[below] {$1$} -- (5.5,2) -- (0,2) node[left] {$1+\frac{\gamma}{2\delta}$};
\end{tikzpicture}
    \caption{Graph of $\rho_\kappa$.}
    \label{fig1}
\end{figure}
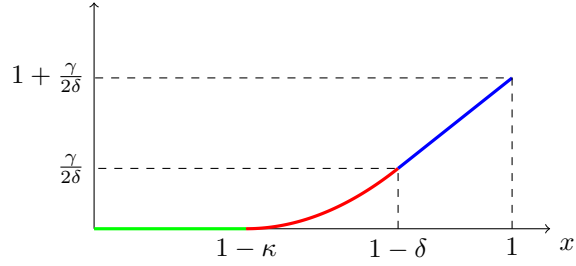
  
Note that $\rho_\kappa\in W^{2,\infty}(0,1)$ and, furthermore
\begin{enumerate}[$(i)$]
    \item $\rho_\kappa$ is a non-decreasing function;
    \item $\rho_\kappa=0$ in $(0,1-\kappa]$, $\rho_\kappa\geq 0$ and $\n{\rho_\kappa'}{L^{\infty}(0,1)}\leq\dps \frac{2}{\kappa}$;
    \item $\rho_\kappa'=\dps\frac{1}{\delta}$ in $(1-\delta,1)$, $\rho_\kappa''=\dps\frac{1}{\delta\gamma}$ in $(1-\ep,1-\delta)$ and $\rho_\kappa''=0$ in $(1-\delta,1)$.
\end{enumerate}






\begin{proof}[Proof of Theorem \ref{th2.3}]

Define 
\[G(\ep):= \frac{1}{\ep}\intw |u_x(t,x)|^2 x^\alpha\dd, \forall \ep\in (0,\ep_0]\]and \[G(0)= \int_0^T|u_x(t,1)|^2\,dt.\]
It is sufficient to prove that $G(\ep)\leq CN_0$, $\forall \ep\in [0,\ep_0]$, where  
\[N_0=\n{f}{L^1(0,T;L^2(0,1))}^2+\n{u_0}{H^1_\alpha}^2+\n{u_1}{L^2(0,1)}^2.\]

Since $G:[0,\ep_0]\longrightarrow \R$ is a continuous function, 
there exists $\delta_0\in [0,\ep_0]$ such that
\[
G(\delta_0)=\max \{ G(\ep); \ep \in [0,\ep_0]\}.
\]

Next, we divide the proof into three situations. \\

\textbf{\underline{Situation 1}: $\delta_0 = 0$.} \\

In this case, we directly apply Proposition \ref{prop-trace} to obtain
\begin{equation*}
    G(\ep)\leq G(0)=\int_0^T|u_x(t,1)|^2\,dt \leq CN_0,\ \forall \ep\in [0,\ep_0].
\end{equation*}

\textbf{\underline{Situation 2}: $\delta_0 \geq \ep _0/2$.} \\

Here, applying Proposition \ref{well-pos}, we get
\begin{align*}
    G(\ep)\leq G(\delta_0)&  =\frac{1}{\delta_0}\int_0^T\int_{1-\delta_0}^1 |u_x(t,x)|^2 x^\alpha\dd
     \leq \frac{2}{\ep_0}\int_0^T\int_{0}^1 |u_x(t,x)|^2 x^\alpha\dd\\
     & \leq \frac{2T}{\ep_0} \sup_{[0,T]}\n{u(t,\cdot)}{H_\alpha^1}^2\leq CN_0.
\end{align*}

\textbf{\underline{Situation 3}: $0<\delta_0<\ep_0/2$.} \\


The proof of this last case is more delicate. For $\delta=\delta_0$, take $\kappa=\delta+\gamma$, where $\gamma\in(0,\delta)$ will be precise later, and $\rho_\kappa$ is given in \eqref{rhokappa}. We observe that\begin{equation}\label{G-0}
    2G(\delta_0)=2\int_0^T\int_{1-\delta_0}^1\frac{1}{\delta_0}|u_x|^2x^\alpha\dd=2\int_0^T\int_{1-\delta_0}^1\rho_\kappa'|u_x|^2x^\alpha\dd\leq  2\int_0^T\int_{1-\kappa}^1\rho_\kappa'|u_x|^2x^\alpha\dd.
\end{equation}
  
We will see that the last integral is bounded by $\frac{3}{2}G(\delta_0)+CN_0$, in order to obtain the desired estimate. This goal will be achieved by the multiplier method.

Firstly, multiply the equation in \eqref{pb1} by $2\rho_\kappa u_x$ and integrating over $Q$. Using integration by parts and recalling that $\rho_\kappa=0$ in $[0,1-\kappa]$,   we have
\begin{multline*}
    -\int_0^T\int_{1-\kappa}^1 2u_tu_{tx}\rho_\kappa\dd+\int_{1-\kappa}^1 2u_t(T)u_x(T)\rho_\kappa\,dx-\int_{1-\kappa}^1 2u_1u_{0x}\rho_\kappa\, dx
    +\int_0^T\int_{1-\kappa}^1 2x^\alpha u_x^2\rho_\kappa'\dd\\+\int_0^T\int_{1-\kappa}^1 2x^\alpha u_xu_{xx}\rho_\kappa \dd -\int_0^T 2u_x^2(t,1)\rho_\kappa(1)\, dt=\intq 2fu_x\rho_\kappa\dd.
\end{multline*}

Since $\rho_\kappa(1-\kappa)=0$ and $u_t(t,1)=0$, we have
\begin{align*}
-\int_0^T\int_{1-\kappa}^1 2u_tu_{tx}\rho_\kappa\dd=& -\int_0^T\int_{1-\kappa}^1 \frac{d}{dx}(u_t^2)\rho_\kappa \dd =\int_0^T\int_{1-\kappa}^1 u_t^2\rho_\kappa'\dd -\int_0^Tu_t^2\rho_\kappa\big\vert_{x=1-\kappa}^1\,dt   \\
& =\int_0^T\int_{1-\kappa}^1 u_t^2\rho_\kappa'\dd.
\end{align*}

Similarly, 
\begin{align*}
\int_0^T\int_{1-\kappa}^1 2x^\alpha u_xu_{xx}\rho_\kappa\dd=-\int_0^T\int_{1-\kappa}^1 x^\alpha u_x^2\rho_\kappa'\dd +\int_0^T u_x^2(t,1)\rho_\kappa(1)\, dt-\int_0^T\int_{1-\kappa}^1 \alpha x^{\alpha-1}u_x^2\rho_\kappa \dd.
\end{align*}
Combining these last three identities, we get
\begin{multline}\label{2.9}
    \int_0^T\int_{1-\kappa}^1 (u_t^2-x^\alpha u_x^2)\rho_\kappa' \dd+\int_0^T\int_{1-\kappa}^1 2x^\alpha u_x^2\rho_\kappa' \dd\\
    =\int_0^T u_x^2(t,1)\rho_\kappa(1)\, dt +\int_0^T\int_{1-\kappa}^1 \alpha x^{\alpha-1} u_x^2 \rho_\kappa\dd+\intq 2fu_x\rho_\kappa\dd \\
    -\int_{1-\kappa}^1 2u_t(T)u_x(T)\rho_\kappa\,dx+\int_{1-\kappa}^1 2u_1u_{0x}\rho_\kappa\, dx.
\end{multline}

Analogously, multiplying the equation in \eqref{pb1} by $\rho_\kappa'u$, we obtain
\begin{multline}\label{2.10}
    -\int_0^T\int_{1-\kappa}^1 (u_t^2-x^\alpha u_x^2)\rho_\kappa' \dd+\int_0^T\int_{1-\kappa}^1 x^\alpha u_xu\rho_\kappa'' \dd\\
    =\intq fu\rho_\kappa'\dd -\int_{1-\kappa}^1 u_t(T)u(T)\rho_\kappa'\,dx+\int_{1-\kappa}^1 u_1u_{0}\rho_\kappa'\, dx.
\end{multline}

Adding \eqref{2.9} and \eqref{2.10}, we get
\begin{multline}\label{2.11}
   \int_0^T\int_{1-\kappa}^1 2x^\alpha u_x^2\rho_\kappa' \dd+\int_0^T\int_{1-\kappa}^1 x^\alpha u_xu\rho_\kappa'' \dd\\
    =\int_0^T u_x^2(t,1)\rho_\kappa(1)\, dt +\int_0^T\int_{1-\kappa}^1 \alpha x^{\alpha-1} u_x^2 \rho_\kappa\dd+\intq 2fu_x\rho_\kappa\dd \\
    -\int_{1-\kappa}^1 2u_t(T)u_x(T)\rho_\kappa\,dx+\int_{1-\kappa}^1 2u_1u_{0x}\rho_\kappa\, dx+\intq fu\rho_\kappa'\dd \\
    -\int_{1-\kappa}^1 u_t(T)u(T)\rho_\kappa'\,dx+\int_{1-\kappa}^1 2u_1u_{0}\rho_\kappa'\, dx.
\end{multline}

Note that the first integral is the one we want to estimate. Thus, let us estimate each integral in the right hand side of this identity by $CN_0$. 

Indeed, Proposition \ref{prop-trace} gives us
\[\int_0^T u_x^2(t,1)\rho_\kappa(1)\, dt \leq CN_0.\]

From inequality \eqref{ineq1},
\begin{equation*}
    \int_0^T\int_{1-\kappa}^1 \alpha x^{\alpha-1} u_x^2 \rho_\kappa\dd \leq \frac{\alpha \n{\rho_\kappa}{L^\infty(0,1)}}{1-\ep_0}\int_0^T\int_{1-\kappa}^1 u_x^2x^\alpha \dd\leq C \sup_{[0,T]}\n{u}{H_\alpha^1}^2 \leq CN_0.
\end{equation*}
\begin{align*}
    \int_0^T\int_{1-\kappa}^1|2f u_x\rho_\kappa| \dd & \leq \frac{2\n{\rho_\kappa}{L^\infty(0,1)}}{(1-\ep_0)^{\alpha/2}}\intw |f||u_x|x^{\alpha/2} \dd\\
    &\leq C \int_0^T \n{f(t)}{L^2(0,1)} \n{x^{\alpha/2}u_x(t)}{L^2(0,1)}\, dt \\
    & \leq C\sup_{[0,T]}\n{u(t)}{H_\alpha^1}\n{f(t)}{L^1(0,T;L^2(0,1))}\\
    &\leq CN_0.
\end{align*}
Similarly, we can estimate 
\[\int_{1-\kappa}^1 2u_t(T)u_x(T)\rho_\kappa\,dx \text{ and }\int_{1-\kappa}^1 2u_1u_{0x}\rho_\kappa\, dx.\]

Before estimate the last three integrals, lets us prove that
\begin{equation}\label{ineq-energy1}
    \frac{1}{\ep^2}\int_{1-\ep}^1 u^2(t,x)\, dx \leq \frac{E(t)}{2(1-\ep_0)^\alpha},\ \forall t\in [0,T],\ \forall \ep\in(0,\ep_0).
\end{equation}
In fact, since $u(t,1)=0$, for any $x\in [1-\ep_0,1)$, we have
\[-u(t,x)=u(t,1)-u(t,x)=\int_x^1 \frac{d}{ds}u(t,s)\, ds\]
From H\"older's inequality, we have
\begin{equation}\label{TFC}
    |u(t,x)|^2\leq (1-x)\int_x^1 u_x^2(t,s)\, ds\leq \frac{1-x}{x^\alpha}\int_x^1 u_x^2(t,s)s^\alpha\, ds.
\end{equation}
Thus,
\begin{align*}
    \frac{1}{\ep^2}\int_{1-\ep}^1| u(t,x)|^2 dx& \leq  \frac{1}{\ep^2}\int_{1-\ep}^1\frac{1-x}{x^\alpha}\left(
    \int_x^1 u_x^2(t,s)s^\alpha\, ds\right)dx\\
    & \leq  \frac{1}{(1-\ep_0)^\alpha\ep^2}\int_{1-\ep}^1 (1-x)\left(\int_0^1 u_x^2(t,s)s^\alpha\, ds\right) dx\\
    & \leq  \frac{E(t)}{(1-\ep_0)^\alpha\ep^2}\int_{1-\ep}^1 (1-x)\,dx\\
     & \leq  \frac{E(t)}{2(1-\ep_0)^\alpha}.
\end{align*}

Now, let us estimate the remain integrals. From \eqref{ineq-energy1} and  H\"older's inequality,
\begin{align*}
    \intq |fu||\rho_\kappa'|\dd& \leq \frac{C}{\kappa}\int_0^T\int_{1-\kappa}^1 |f||u|\dd \leq C\int_0^T \n{f(t)}{L^2(0,1)}\left(\frac{1}{\kappa^2} \int_{1-\kappa}^1u_x^2(t,x)\,dx\right)^{1/2}dt\\
    & \leq C\int_0^T\n{f(t)}{L^2(0,1)} E(t)^{1/2}\,dt\\
    & \leq C\sup_{[0,T]}E(t)^{1/2}\n{f}{L^1(0,T;L^2(0,1))}\leq CN_0.
\end{align*}
Similarly, we can finally estimate 
\[\int_{1-\kappa}^1 u_t(T)u(T)\rho_\kappa'\,dx \text{ and }\int_{1-\kappa}^1 2u_1u_{0}\rho_\kappa'\, dx.\]

Hence, from \eqref{2.11}, we have
\begin{equation}\label{2.13}
   2\int_0^T\int_{1-\kappa}^1 x^\alpha u_x^2\rho_\kappa' \dd\leq \int_0^T\int_{1-\kappa}^1 x^\alpha |u_x||u||\rho_\kappa'' |\dd +CN_0, 
\end{equation}
Recalling \eqref{G-0}, it suffices to prove that 
\begin{equation}\label{ineq4}
\int_0^T\int_{1-\kappa}^1 x^\alpha |u_x||u||\rho_\kappa'' |\dd \leq \frac{3}{2}G(\delta_0)+CN_0.    
\end{equation}

Indeed, since $\rho_\kappa''=0$ in $(1-\delta_0,1)$ and $\rho_\kappa''\leq \frac{1}{\gamma\delta_0}$ in $(1-\kappa,1-\delta_0)$, we have 
\begin{equation}
\begin{split}\label{ineq2}
        \int_0^T\int_{1-\kappa}^1 x^\alpha |u_x||u||\rho_\kappa'' |\dd & \leq \frac{1}{\gamma\delta_0} \int_0^T\int_{1-\kappa}^{1-\delta_0} x^\alpha |u_x||u|\dd\\
    & \leq \left(\int_0^T\int_{1-\kappa}^{1-\delta_0}\frac{1}{\gamma}  |u_x|^2x^\alpha\dd\right)^{1/2}\left(\int_0^T\int_{1-\kappa}^{1-\delta_0}\frac{1}{\gamma  {\delta_0^2}}  |u|^2x^\alpha\dd\right)^{1/2}\\
    & = I_1^{\frac{1}{2}}\,I_2^{\frac{1}{2}}.
\end{split}
\end{equation}

Since $\kappa=\gamma+\delta_0$, we have
\begin{equation*}\label{ineq3}
    I_1 = \frac{1}{\gamma}\left(\kappa G(\kappa)-\delta_0G(\delta_0)\right)\leq G(\delta_0).
\end{equation*}





In order to estimate $I_2$, proceeding as in the proof of \eqref{ineq-energy1}, we can see that
\begin{equation}\label{ineq-I2}
|u|^2x^\alpha\leq C \left((1-x)\int_x^1 u_x^2 s^\alpha\, ds+(1-x)x^{  {\alpha-2}}\int_x^1 u^2 \, ds\right).    
\end{equation}

Besides that, we will need an estimate that the proof is given in the Appendix. Namely, taking $a=1-\ep_0$ in inequality \eqref{ineq-holder}, we have that
\[\sup_{x\in [1-\ep_0,1]}|u(t,x)|\leq \max\left\{\frac{1}{\sqrt{\ep_0}},\sqrt{\frac{\ep_0}{(1-\ep_0)^{\alpha}}}\right\}\n{u(t,\cdot)}{H^1_\alpha}.\]

Hence,
\begin{align*}
    I_2& \leq \frac{C}{\gamma   {\delta_0^2}}\int_0^T\int_{1-\kappa}^{1-\delta_0} (1-x)\int_x^1 u_x^2 s^\alpha\, ds \dd+  \frac{C}{\gamma   {\delta_0^2}}\int_0^T\int_{1-\kappa}^{1-\delta_0}(1-x)x^{\alpha-2}\int_x^1 u^2 \, ds \dd\\
    & \leq \frac{C}{\gamma   {\delta_0^2}}\int_{1-\kappa}^{1-\delta_0} (1-x)^2G(1-x) \dd+ \frac{C}{\gamma   {\delta_0^2}}\int_{1-\kappa}^{1-\delta_0}(1-x)(1-\ep_0)^{2-\alpha}\int_0^T\int_x^1 C_{\alpha,\ep_0}\nh{u(t,\cdot)}^2 \, dsdt\, dx\\
    & \leq \frac{C}{\gamma   {\delta_0^2}}G(\delta_0)\int_{1-\kappa}^{1-\delta_0} (1-x)^2\, dx+ \frac{C_{\alpha,\ep_0}}{\gamma   {\delta_0^2}}\sup_{[0,T]}\nh{u(t,\cdot)}^2\int_{1-\kappa}^{1-\delta_0}(1-x)^2\, dx\\
    & \leq C\frac{9}{4}G(\delta_0)+CN_0.
\end{align*}

Therefore, substituting $I_1$ and $I_2$ into \eqref{ineq2}, we finally get
\begin{equation*}
        \int_0^T\int_{1-\kappa}^1 x^\alpha |u_x||u||\rho_\kappa'' |\dd \leq \left(\frac{9}{4}G(\delta_0)^2+CN_0G(\delta_0)\right)^\frac{1}{2}\leq \frac{3}{2}G(\delta_0)+CN_0,
\end{equation*}
as required in \eqref{ineq4}.
\end{proof}

\begin{proof}[Proof of Theorem \ref{th2.1}]
Let us prove that Theorem \ref{th2.1} as a consequence of Theorem \ref{th2.3}. Indeed, from \eqref{TFC}
\begin{align*}
    \frac{1}{\ep^3}\intw |u(t,x)|^2 \dd & = \frac{1}{\ep^3}\intw \frac{1-x}{x^\alpha}\int_x^1 u_x^2(t,s)s^\alpha\, ds \dd\\
   & \leq \frac{1}{(1-\ep_0)^\alpha\ep^3}\intw (1-x)\int_x^1 u_x^2(t,s)s^\alpha\, ds \dd\\
   & \leq \frac{1}{(1-\ep_0)^\alpha\ep^2}\int_{1-\ep}^1 (1-x)\frac{1}{\ep}\int_0^T\int_{1-\ep}^1 u_x^2(t,s)s^\alpha\, dsdt\, dx \\
   & \leq \frac{CN_0}{(1-\ep_0)^\alpha\ep^2}\int_{1-\ep}^1 (1-x)\, dx \\
   & \leq CN_0.
\end{align*}

\end{proof}

\section{Regularity for very weak solutions}\label{sec-very-weak}

We start this section recalling that the concept of weak solutions for the wave equations, with initial data $(f,u_0, u_1)$ in $L^1 (0,T;L^2 (0,1)) \times H^1_{\alpha} \times L^2 (0,1)$, is placed in Definition \ref{weak}. So, Theorems \ref{th2.1} and \ref{th2.3} bring regularity results for this first type of solution. On the other hand, Definition \ref{trans} is about very weak solutions for wave equations, when the initial data belongs to $L^1 (0,T;L^2 (0,1)) \times L^2 (0,1) \times H^{-1}_{\alpha}$. This current section is devoted to the proof of Theorem \ref{th3.1}, which provides some information about the regularity of this second class of solutions.

Before achieving the main goal of this part, let us describe a useful procedure which allows us to get some additional regularity for solutions of \eqref{pb2}. Take a very weak solution $z$ of \eqref{pb2} related to the initial data $(g,z^0,z^1)\in  L^1(0,T;L^2(0,1))\times L^2(0,1)\times H^{-1}_\alpha$ and let  $\psi^0$ be the solution for the elliptic problem
\[(x^\alpha\psi^0_x)_x=z^1,\ \ \psi^0\in H^1_\alpha.\]
We can see that 
\[\psi(t,x)=\int_0^t z(s,x)\, ds+\psi^0(x)\]
is a weak solution of \eqref{pb2} with initial data $(G,\psi^0,z^0)\in  W^{1,1}(0,T;L^2(0,1))\times H^1_\alpha(0,1)\times L^2(0,1)$, where $G(t,x)=\int_0^t g(s,x)\, ds$.  In this case, Proposition \ref{well-pos} assures that 
\[
\displaystyle \psi\in C^0([0,T];H^1_\alpha) \cap C^1([0,T];L^2(0,1)).
\]
As a consequence, from Proposition \ref{prop-trace}, $\psi_x(\cdot, 1)\in L^2(0,T)$, whence 
\[
z_x(\cdot,1)=\psi_{xt}(\cdot,1)\in H^{-1}(0,T).
\]
We notice that, given $a>0$, $\psi \in C^0([0,T];H^1(a,1))\hookrightarrow  L^2(0,T;H^1(a,1))\equiv H^1(a,1;L^2(0,T))$ and so that
\[
\displaystyle z=\psi_{t}\in H^1(a,1;H^{-1}(0,T))\hookrightarrow C^0([a,1];H^{-1}(0,T)).
\]

 Next, we prove the main result desired here.

\begin{proof}[Proof of Theorem \ref{th3.1}]
As described at the beginning of this section, starting from the very weak solutions $\varphi_\ep,\varphi$ of \eqref{pb2}, we can obtain the related weak solutions
\[
\Phi_{\varepsilon}, \Phi \in C^0([0,T];H_{\alpha}^{1}) \cap C^{1} ([0,T]; L^2 (0,1)).
\]
satisfying
\begin{equation*}
    \begin{cases}
    (x^\alpha \Phi_{\varepsilon x}^{0})_{x} = \varphi_{\varepsilon}^{1}, \text{ where } \Phi_{\varepsilon}^{0} \in H_{\alpha}^{1},\\
    \displaystyle \Phi_{\varepsilon} (t,x) = \int_{0}^{t} \varphi_{\varepsilon} (s,x)ds + \Phi_{\varepsilon}^{0} (x),
    \end{cases}
\end{equation*}
and
\begin{equation*}
    \begin{cases}
    (x^\alpha \Phi_{x}^{0})_{x} = \varphi^{1}$, where $\Phi^{0} \in H_{\alpha}^{1},\\
    \displaystyle \Phi (t,x) = \int_{0}^{t} \varphi (s,x)ds + \Phi ^{0} (x),
    \end{cases}
\end{equation*}
respectively. To be more precise, $\Phi_{\varepsilon}$ comes from the initial data $(H_\ep,\Phi_\ep^0,\varphi_\ep^0)$, where $H_\ep(t,x)=\int_0^t h_\ep(s,x)\, ds$, while $\Phi$ is associated to $(H_,\Phi^0,\varphi^0)$, with $H(t,x)=\int_0^t h(s,x)\, ds$.



Additionally,
\[\Phi_{\ep x}(\cdot,1),\Phi_{ x}(\cdot,1)\in L^2(0,T) \text{ and } \Phi_{\ep x}(\cdot,1) \rightharpoonup \Phi_x(\cdot,1) \text{ in } L^2(0,T),  \]
following
\[\varphi_{\ep x}(\cdot,1),\varphi_{x}(\cdot,1)\in H^{-1}(0,T) \text{ and } \varphi_{\ep x}(\cdot,1) \rightharpoonup \varphi_x(\cdot,1) \text{ in } H^{-1}(0,T).  \]
In order to get $\varphi_x(\cdot,1)\in L^2(0,T)$, it suffices to prove the next claim.

\textbf{\underline{Claim}:} There exists $C>0$ such that 
\begin{equation}\label{bound}
    |\langle\varphi_{x} (\cdot,1), u\rangle|\leq C\|u\|_{L^2 (0,T)} \emph{ for any } u\in \mathcal D (0,T).
\end{equation}

In fact, let us take $u\in \mathcal D (0,T)$ and consider $w(t,x)=(1-x)u(t)$. We notice that $w(t,1)=0$, $w_x (t,1)=-u(t)$ and
\[
\displaystyle \mbox{supp } w(\cdot ,x) = \mbox{supp}(u)
\]
for each $x\in (1-\varepsilon ,1)$. In this case,

\begin{align*}
    \displaystyle \frac{1}{\varepsilon ^3} \intw |w(t,x)|^2 dxdt
    &=\frac{1}{\varepsilon ^3} \int_{0}^{T} |u(t)|^2 \int_{1-\varepsilon}^{1} |x-1|^2 dx dt\\
    &=\frac{1}{3} \int_{0}^{T} |u(t)|^2 dt,
\end{align*}
which means that 
\begin{equation}\label{3.12}
    \bigg( \frac{1}{\varepsilon ^3} \intw |w(t,x)|^2 dxdt\bigg)^{1/2}
    =\frac{1}{\sqrt{3}} \|u\|_{L^2 (0,T)}.
\end{equation}
Since
\[
\displaystyle \Phi_{\varepsilon x} (t,x) = \int_{0}^{t} \varphi_{\varepsilon x} (s,x)ds + \Phi_{\varepsilon x}^{0} (x),
\]
we have $\Phi_{\varepsilon x t} (t,1) = \varphi_{\varepsilon x} (t,1)$ and so that

\begin{align*}
    \displaystyle \langle\varphi_{\varepsilon x} (\cdot,1),u\rangle
&=\langle\Phi_{\varepsilon x t} (\cdot,1),u\rangle 
=-\langle\Phi_{\varepsilon x} (\cdot,1),u'\rangle 
=-\int_{0}^{T} \Phi_{\varepsilon x} (t,1)u'(t) dt \\
&=\begin{multlined}[t]
\frac{3}{\varepsilon ^3} \intw \Phi_{\varepsilon} (t,x) w_t (t,x) dxdt \\
-\frac{3}{\varepsilon ^3} \intw \Phi_{\varepsilon} (t,x) w_t (t,x) dxdt
-\int_{0}^{T} \Phi_{\varepsilon x} (t,1)u'(t) dt
\end{multlined}\\
&=\begin{multlined}[t]
-\frac{3}{\varepsilon ^3} \intw \varphi_{\varepsilon} (t,x) w(t,x) dxdt \\
-\frac{3}{\varepsilon ^3} \intw \Phi_{\varepsilon} (t,x) w_t (t,x) dxdt
-\int_{0}^{T} \Phi_{\varepsilon x} (t,1)u'(t) dt 
\end{multlined}\\
&=: A_{\varepsilon} + B_{\varepsilon},
\end{align*}
where 
\[
A_{\varepsilon} = -\frac{3}{\varepsilon ^3} \intw \varphi_{\varepsilon} (t,x) w(t,x) dxdt
\]
and
\[
B_{\varepsilon} = -\frac{3}{\varepsilon ^3} \intw \Phi_{\varepsilon} (t,x) w_t (t,x) dxdt
    -\int_{0}^{T} \Phi_{\varepsilon x} (t,1)u'(t) dt
\]
From \eqref{3.4} and \eqref{3.12}, it is very clear that

\begin{equation}\label{Aep}
\displaystyle |A_{\varepsilon}| \leq 3 \bigg( \frac{1}{\varepsilon ^3} \intw |\varphi_{\varepsilon} (t,x)|^2 dxdt\bigg)^{1/2} \bigg( \frac{1}{\varepsilon ^3} \intw |w(t,x)|^2 dxdt\bigg)^{1/2}
\leq (3C)^{1/2} \|u\|_{L^2 (0,T)},
\end{equation}
where $C>0$ does not depend on $\varepsilon$. The remainder of this proof is devoted to check that $B_{\varepsilon} \to 0$ as $\varepsilon \to 0$. In fact, using
\begin{itemize}
    \item $\Phi_{\varepsilon} (t,x) w_t (t,x) = \Phi_{\varepsilon} (t,x) (1-x) u'(t)$;
    \item $\displaystyle \int_{x}^{1} \Phi_{\varepsilon x} (t,\xi) d\xi = \Phi_{\varepsilon} (t,1)-\Phi_{\varepsilon} (t,x) =-\Phi_{\varepsilon} (t,x)$;
    \item $\displaystyle \Phi_{\varepsilon x} (t,1) = \frac{3}{\varepsilon ^3 } \int_{1-\varepsilon}^{1} (1-x) \int_{x}^{1} \Phi_{\varepsilon x} (t,1)d\xi dx$,
\end{itemize}
we certainly get
\begin{align*}
    \displaystyle B_{\varepsilon}
    &=-\frac{3}{\varepsilon ^3} \int_{0}^{T} u'(t) \int_{1-\varepsilon}^{1} \Phi_{\varepsilon} (t,x)(1-x)dxdt
    -\int_{0}^{T} \Phi_{\varepsilon x} (t,1)u'(t) dt \\
    &=-\frac{3}{\varepsilon^3} \int_{0}^{T} u'(t) \int_{1-\varepsilon}^{1} \bigg[ \Phi_{\varepsilon} (t,x)(1-x) +(1-x) \int_{x}^{1} \Phi_{\varepsilon x} (t,1) d\xi \bigg] dxdt \\
    &=-\frac{3}{\varepsilon^3} \int_{0}^{T} u'(t) \int_{1-\varepsilon}^{1} (1-x)^2 \bigg[ \frac{1}{1-x} \int_{x}^{1} -\Phi_{\varepsilon x} (t,\xi) d\xi + \Phi_{\varepsilon x} (t,1) \bigg] dxdt \\
    &=\frac{3}{\varepsilon^3} \int_{1-\varepsilon}^{1} (1-x)^2 \bigg[ \frac{1}{1-x} \int_{x}^{1} \langle\Phi_{\varepsilon x} (\cdot, \xi) - \Phi_{\varepsilon x} (\cdot ,1),u' \rangle_{H^{-1}(0,T),H_0^1(0,T)} d\xi \bigg] dx.
\end{align*}
It suffices to prove that $\|\Phi_{\varepsilon x} (\cdot, \xi)\|_{H^{-1}(0,T)}$ is uniformly continuous. Indeed, let $\Psi_{\varepsilon}$ be the weak solution of \eqref{pb2}, with initial data $(\mathbf{H}_\ep,\Psi_\ep^0,\Phi_\ep^0)$, where  
\begin{equation*}
    \begin{cases}
    (x^\alpha \Psi_{\varepsilon x}^{0})_{x} = \varphi_{\varepsilon}^{0}, \text{ where } \Psi_{\varepsilon}^{0} \in H_{\alpha}^{1}\cap H^2_\alpha,\\
    \displaystyle \Psi_{\varepsilon} (t,x) = \int_{0}^{t} \Phi_{\varepsilon} (s,x)ds + \Psi_{\varepsilon}^{0} (x),
    \end{cases}
\end{equation*}
and $\mathbf{H}_\ep(t,x)=\int_0^t h_\ep(s,x)\, ds$. Hence, $\Psi_\ep\in L^2(0,T;H^1_\alpha\cap H^2_\alpha)$ and, in particular, $\Psi_\ep \in L^2(0,T;H^2(1-\ep,1))$. As a result, 
$\Psi_{\ep x}\in L^2(0,T;H^1(1-\ep,1))\equiv H^1(1-\ep,1;L^2(0,T))$, whence
\[\Phi_{\ep x}=\Psi_{\ep x t}\in H^{1}(1-\ep,1;H^{-1}(0,T))\hookrightarrow C^0([1-\ep,1];H^{-1}(0,T)),\]
following the required uniform continuity.


Finally, \eqref{liminf} is an immediate consequence of
$\langle\varphi_{\varepsilon x} (\cdot,1),u\rangle = A_{\varepsilon} + B_{\varepsilon},$ \eqref{Aep} and $B_\ep \to 0$.
\end{proof}

\appendix
\renewcommand{\thesection}{\Alph{section}} 

\section{Appendix}

The aim of this section is to obtain the inequality \eqref{ineq-holder}, that was fundamental in the proof of Theorem \ref{th2.3}. Besides that, we end up proving an embedding for the Sobolev space $H_\alpha^1$ in a H\"older continuous space.

Given $a\in (0,1)$, let us denote by $H^1(a,1)$ and $C^{0,1/2}([a,1])$  classical Sobolev and H\"older spaces.

\begin{lem}\label{im-ha-h1} $\ha \hookrightarrow H^1(a,1),\ \forall a\in (0,1)$ and
\[\n{u}{H^1(a,1)}\leq C_{\alpha,a}\nh{u}, \]
where $C_{\alpha,a }=\sqrt{\max\{1,1/a^\alpha\}}$.
\end{lem}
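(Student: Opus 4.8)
The plan is to establish the embedding $H^1_\alpha \hookrightarrow H^1(a,1)$ by comparing the two norms directly on the interval $[a,1]$, where the degeneracy coefficient $x^\alpha$ is bounded away from zero. The key observation is that for $x \in [a,1]$ one has $a^\alpha \leq x^\alpha \leq 1$, so the weight $x^\alpha$ is comparable to $1$ on this interval, with constants depending only on $a$ and $\alpha$.

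First I would take $u \in H^1_\alpha$. By definition, $u \in L^2(0,1)$ and $x^{\alpha/2} u_x \in L^2(0,1)$; in particular $u$ is (locally) absolutely continuous on $(0,1]$, so $u_x$ makes sense a.e. Restricting to $[a,1]$, I would write
\begin{equation*}
\int_a^1 |u_x(x)|^2 \, dx = \int_a^1 \frac{1}{x^\alpha} \, x^\alpha |u_x(x)|^2 \, dx \leq \frac{1}{a^\alpha} \int_a^1 x^\alpha |u_x(x)|^2 \, dx \leq \frac{1}{a^\alpha} \| x^{\alpha/2} u_x \|_{L^2(0,1)}^2,
\end{equation*}
using that $x \mapsto x^{-\alpha}$ is decreasing on $(0,1]$ so its maximum over $[a,1]$ is $a^{-\alpha}$. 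Combined with the trivial bound $\int_a^1 |u(x)|^2\,dx \leq \|u\|_{L^2(0,1)}^2$, this shows $u \in H^1(a,1)$.

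Then I would assemble the norm estimate:
\begin{equation*}
\|u\|_{H^1(a,1)}^2 = \int_a^1 |u|^2\,dx + \int_a^1 |u_x|^2\,dx \leq \|u\|_{L^2(0,1)}^2 + \frac{1}{a^\alpha}\| x^{\alpha/2} u_x\|_{L^2(0,1)}^2 \leq \max\{1, 1/a^\alpha\} \, \|u\|_{H^1_\alpha}^2,
\end{equation*}
which gives $\|u\|_{H^1(a,1)} \leq C_{\alpha,a} \|u\|_{H^1_\alpha}$ with $C_{\alpha,a} = \sqrt{\max\{1, 1/a^\alpha\}}$, as claimed. Note that since $a \in (0,1)$ and $\alpha > 0$ we have $a^\alpha < 1$, so in fact $C_{\alpha,a} = a^{-\alpha/2}$, but writing it as $\sqrt{\max\{1,1/a^\alpha\}}$ covers both conventions uniformly.

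There is no real obstacle here — the statement is essentially a bookkeeping exercise about a non-degenerate weight on a compact subinterval. The only point requiring a moment's care is checking that elements of $H^1_\alpha$ genuinely have an $L^2$ weak derivative on $[a,1]$ (as opposed to merely $x^{\alpha/2}u_x \in L^2$), which follows immediately from the local absolute continuity built into the definition of $H^1_\alpha$ together with the bound above showing $u_x \in L^2(a,1)$.
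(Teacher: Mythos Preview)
Your proof is correct and follows essentially the same approach as the paper's: bound $\int_a^1|u_x|^2\,dx$ via $x^{-\alpha}\le a^{-\alpha}$ on $[a,1]$, combine with the trivial $L^2$ bound, and take the maximum of the two constants. Your additional remarks (that $C_{\alpha,a}=a^{-\alpha/2}$ in this range, and the observation about absolute continuity guaranteeing $u_x\in L^2(a,1)$) are accurate but not needed beyond what the paper records.
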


\begin{proof}
Given $a\in (0,1)$, if $u\in \ha$, then $u\in L^2(a,1)$. Besides that, we have
\begin{equation}\label{eq1}
\n{u'}{L^2(a,1)}^2=\int_a^1 x^{-\alpha}x^\alpha |u'|^2\, dx\leq \frac{1}{a^\alpha}|u'|^2\, dx =\frac{1}{a^\alpha}\n{x^{\alpha/2}u'}{L^2(0,1)}^2.
\end{equation}

Therefore,
\[\n{u}{H^1(0,1)}^2\leq \max\{1,1/a^\alpha\}\nh{u}^2.\]
\end{proof}

Classical Sobolev Imbedding,  together with Lemma \ref{im-ha-h1}, give us that 
\begin{equation}\label{imb-ha-c0}
\ha \hookrightarrow H^1(a,1)\hookrightarrow C^{0,1/2}([a,1]), \text{ for any }  a\in (0,1).
\end{equation}
Moreover,  we have that any $u\in \ha$ satisfies
\[u(x)-u(y)=\int_y^x u'(\xi)\, d\xi,\ \forall x,y\in (0,1).\]

However, let us give the proof of \eqref{imb-ha-c0}  in order to see how the constant of embedding depends on $a,\alpha$.

\begin{lem}
$\ha \hookrightarrow C^{0,1/2}([a,1])$,  for any   $a\in (0,1)$, and 
\begin{equation}\label{ineq-holder}
\n{u}{C^{0,1/2}([a,1])}\leq C_{\alpha,a}\nh{u},\ \forall u\in \ha,  
\end{equation}
where $C_{\alpha,a}:=\max\left\{(1 -a)^{-1/2},\frac{(1-a)^{1/2}}{a^{\alpha/2}}\right\}$.
\end{lem}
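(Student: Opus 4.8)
The plan is to establish the two bounds that constitute the embedding $\ha \hookrightarrow C^{0,1/2}([a,1])$: a uniform bound $\|u\|_{L^\infty(a,1)} \le C_{\alpha,a}\nh{u}$ and a H\"older seminorm bound $\sup_{x\neq y \in [a,1]} \frac{|u(x)-u(y)|}{|x-y|^{1/2}} \le C_{\alpha,a}\nh{u}$, with $C_{\alpha,a} = \max\{(1-a)^{-1/2}, (1-a)^{1/2}a^{-\alpha/2}\}$. First I would recall that every $u\in\ha$ is (locally) absolutely continuous on $(a,1]$ with $u(1)=0$, so that the fundamental theorem of calculus gives $u(x) - u(y) = \int_y^x u'(\xi)\,d\xi$ for all $x,y \in [a,1]$, a fact already noted in the excerpt.

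For the H\"older seminorm, I would apply the Cauchy--Schwarz inequality to $u(x)-u(y) = \int_y^x u'(\xi)\,d\xi$, obtaining $|u(x)-u(y)| \le |x-y|^{1/2} \left(\int_a^1 |u'(\xi)|^2\,d\xi\right)^{1/2}$, and then use the estimate \eqref{eq1} from Lemma \ref{im-ha-h1}, namely $\|u'\|_{L^2(a,1)}^2 \le a^{-\alpha}\|x^{\alpha/2}u'\|_{L^2(0,1)}^2 \le a^{-\alpha}\nh{u}^2$. This yields the seminorm bound with constant $a^{-\alpha/2} \le \max\{(1-a)^{-1/2}, (1-a)^{1/2}a^{-\alpha/2}\}$ after noting $a^{-\alpha/2}\cdot 1$; more precisely I would just carry $a^{-\alpha/2}$ through and absorb it into $C_{\alpha,a}$ at the end, being slightly careful that the stated constant dominates it (one can check $(1-a)^{1/2}a^{-\alpha/2} \ge a^{-\alpha/2}$ fails in general, so I should present the seminorm constant as $a^{-\alpha/2}$ and observe it is $\le C_{\alpha,a}$ only when combined appropriately, or simply state the clean bound $\|u\|_{C^{0,1/2}} \le C_{\alpha,a}\nh u$ follows from taking the max of the two pieces). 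For the sup bound, since $u(1)=0$ I would write $|u(x)| = |u(x)-u(1)| = \left|\int_x^1 u'(\xi)\,d\xi\right| \le (1-x)^{1/2}\|u'\|_{L^2(a,1)} \le (1-a)^{1/2}a^{-\alpha/2}\nh u$ for $x\in[a,1]$; alternatively, to also capture the $(1-a)^{-1/2}$ term one estimates $|u(x)|$ by averaging, e.g. writing $\|u\|_{L^\infty(a,1)} \le (1-a)^{-1/2}\|u\|_{L^2(a,1)} + (1-a)^{1/2}\|u'\|_{L^2(a,1)}$ via a standard one-dimensional Sobolev inequality, which produces exactly the two competing terms in $C_{\alpha,a}$.

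Combining the $L^\infty$ bound and the seminorm bound gives $\|u\|_{C^{0,1/2}([a,1])} = \|u\|_{L^\infty(a,1)} + [u]_{C^{0,1/2}([a,1])} \le C_{\alpha,a}\nh u$ after adjusting the constant by at most a fixed factor, or directly with $C_{\alpha,a}$ as stated if one defines the H\"older norm as the maximum rather than the sum. I do not expect any serious obstacle here; the only mild subtlety is bookkeeping the constant so that it matches the claimed $\max\left\{(1-a)^{-1/2}, (1-a)^{1/2}a^{-\alpha/2}\right\}$ exactly, which requires choosing the right elementary one-dimensional inequality (mean-value plus FTC, rather than a cruder $L^2 \to L^\infty$ estimate) and, in the strongly degenerate case, making sure the local absolute continuity on $(a,1]$ with $a>0$ suffices (it does, since $a$ is bounded away from the degeneracy point $0$).
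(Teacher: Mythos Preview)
Your proposal is correct and essentially matches the paper's proof: the seminorm bound comes from Cauchy--Schwarz together with \eqref{eq1}, and for the sup norm the paper uses precisely your ``averaging'' alternative, writing $|u(y)|\le |u(x)|+(1-a)^{1/2}a^{-\alpha/2}\nh{u}$ and integrating in $x$ over $[a,1]$ to produce the two terms in $C_{\alpha,a}$. The constant-bookkeeping concern you flag (that the bare seminorm constant $a^{-\alpha/2}$ is not always dominated by $C_{\alpha,a}$) is present in the paper's argument as well and is harmless, since only the $L^\infty$ part of \eqref{ineq-holder} is actually invoked later.
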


\begin{proof}
Given $x,y\in [a,1]$, from Cauchy-Schwartz inequality and \eqref{eq1}, we have that 
\begin{equation*}
\begin{split}
|u(x)-u(y)|& \leq \int_y^x |u'|\, d\xi \leq \left(\int_y^x\, d\xi\right)^{1/2} \left(\int_y^x |u'|^2\ d\xi\right)^{1/2}\\
&=|x-y|^{1/2}\left(\int_a^1 |u'|^2\ d\xi\right)^{1/2}\leq|x-y|^{1/2} \frac{1}{a^{\alpha/2}}\nh{u}. 
\end{split}
\end{equation*}
Hence,
\[\left[u\right]_{C^{0,1/2}([a,1])}\leq \frac{1}{a^{\alpha/2}}\nh{u}. \]

Besides that, 
\begin{equation*}
\begin{split}
|u(y)|& \leq |u(x)|+|u(x)-u(y)|\leq |u(x)|+|x-y|^{1/2} \frac{1}{a^{\alpha/2}}\nh{u}\\
& \leq |u(x)|+ \frac{(1-a)^{1/2}}{a^{\alpha/2}}\nh{u},\ \forall x,y\in [a,1].
\end{split}
\end{equation*}
Integrating with respect to $x$ over $[a,1]$, we get
\[(1 -a)|u(y)|\leq (1 -a)^{1/2}\n{u}{L^2(0,1)}+\frac{(1-a)^{3/2}}{a^{\alpha/2}}\nh{u},\]
we obtain
\begin{align*}
|u(y)|& \leq (1 -a)^{-1/2}\n{u}{L^2(0,1)}+\frac{(1-a)^{1/2}}{a^{\alpha/2}}\nh{u}\\
& \leq \max\left\{(1 -a)^{-1/2},\frac{(1-a)^{1/2}}{a^{\alpha/2}}\right\}\nh{u}, \ \forall y\in [a,1],
\end{align*}
which gives us the desire inequality.
\end{proof}
\bibliography{references}
\end{document}